\newtheorem{example}[theorem]{Example}
\newcommand{\complex}{\mathbb{C}}
\begin{document}



\bibliographystyle{plain}
\title{
On parallel multisplitting block iterative methods for linear
systems arising in the numerical\\ solution of Euler
equations\thanks{Corresponding author: Cheng-yi Zhang, School of
Science, Xi'an Polytechnic University, Xi'an, Shaanxi, 710048, P.R.
China; Email: cyzhang08@126.com or zhangchengyi$_{-}$2004@163.com.}}

\author{
Cheng-yi Zhang\thanks{Institute of Information and system Science,
Xi'an Jiaotong University, Xi'an, Shaanxi, 710049, P.R. China;
School of Science, Xi'an Polytechnic University, Xi'an, Shaanxi,
710048, P.R. China. This work was partly supported by the Science
Foundation of the Education Department of Shaanxi Province of China\
(2013JK0593), the Scientific Research Foundation\ (BS1014) and the
Education Reform Foundation\ (2012JG40) of Xi'an Polytechnic
University, and the National Natural Science Foundations of China
(11201362 and 11271297).}
 \and Shuanghua Luo\thanks{School of Science, Xi'an
Polytechnic University, Xi'an, Shaanxi, 710048, P.R. China.}\and
Zongben Xu\thanks {Institute of Information and system Science and
School of Mathematics and Statistics, Xi'an Jiaotong University,
Xi'an, Shaanxi, 710049, P.R. China.} }

\pagestyle{myheadings} \markboth{Cheng-yi Zhang, Shuanghua Luo and
Zongben Xu}{On parallel multisplitting block iterative methods for
linear systems} \maketitle
\begin{abstract}
The paper studies the convergence of some parallel multisplitting
block iterative methods for the solution of linear systems arising
in the numerical solution of Euler equations. Some sufficient
conditions for convergence are proposed. As special cases the
convergence of the parallel block generalized AOR (BGAOR), the
parallel block AOR (BAOR), the parallel block generalized SOR
(BGSOR), the parallel block SOR (BSOR), the extrapolated parallel
BAOR and the extrapolated parallel BSOR methods are presented.
Furthermore, the convergence of the parallel block iterative methods
for linear systems with special block tridiagonal matrices arising
in the numerical solution of Euler equations are discussed. Finally,
some examples are given to demonstrate the convergence results
obtained in this paper.
\end{abstract}
\begin{keywords}
Generalized $H-$matrices; Multisplitting; Parallel multisplitting;
Block iterative method; Extrapolation; Convergence.
\end{keywords}
\begin{AMS}
65F10; 65N22; 15A48.
\end{AMS}

\section{Introduction} \label{intro-sec}
In this paper we consider the solution methods for the system of
$km$ linear equations
\begin{equation}\label{r1}
Ax=b, \end{equation} where $A=[A_{ij}]\in \complex^{km\times km}$ is
an $m\times m$ block matrix with all the blocks $A_{ij}\in
\complex^{k\times k}$, $b,\ x\in \complex^{km\times 1}$. The class
of systems arises not only in the numerical solution of $2D$ and
$3D$ Euler equations in fluid dynamics \cite{{E.J2},{P.S8},{R.N11}},
but also in the discretizations of PDEs associated to invariant tori
\cite{{L.J3},{L.G4}}.

Elsner and Mehrmann in \cite{{L.V5},{L.V6}} gave several convergence
results for some block iterative methods such as block Jacobi
method, block Gauss-Seidel method and block SOR method for the
solution of linear system (\ref{r1}) when the coefficient matrix $A$
is either generalized $M-$matrices (see
\cite{{L.V5},{L.V6},{R.N13}}) or consistently ordered $p-$cyclic
matrices (see \cite{{R.S.14}}). Later, Nabben \cite{{R.N11},{R.N12}}
established some further results on convergence of block iterative
methods for the solution of this class of linear systems with
conjugate generalized $H-$matrices (see \cite{{zhang21}}). For
example, he established convergence of the block Jacobi method, the
block Gauss-Seidel method, the block JOR-method and the block
SOR-method.

Recently, Zhang et al \cite{{zhang21}} further proposed several
convergence results for some block iterative methods including the
block Jacobi method, the block Gauss-Seidel method, the block SOR
method and the block AOR method for the solution of linear systems
when the coefficient matrices are generalized $H-$matrices.

In what follows we will introduce some iterative methods of the
system (\ref{r1}). Consider the following splitting of the
coefficient matrix $A$ of (\ref{r1}),
\begin{equation}\label{1r7}
A=D-L-U,
\end{equation}
where $D$ is nonsingular, $L$ and $U$ are not necessarily (block)
triangular in general. Assume that $det(D-\gamma L)\neq 0$. Then the
 (block) generalized accelerated overrelaxation (GAOR (BGAOR)) method is defined by
\begin{equation}\label{1r8}
x^{(i+1)}={\cal{L}}(\gamma,\omega)x^{(i)}+(D-\gamma L)^{-1}b,\ \ \ \
i=1,2,\cdots\cdots,
\end{equation}
where ${\cal{L}}(\gamma,\omega)=(D-\gamma
L)^{-1}[(1-\omega)D+(\omega-\gamma)L+\omega U]$ is the iteration
matrix of the method (\ref{1r8}). For $\omega=\gamma$, the (block)
generalized AOR method reduces to the (block) generalized SOR (GSOR
(BGSOR)) method. If the splitting (\ref{1r7}) is standard (block)
decomposition (i.e, $D$ is (block) diagonal and nonsingular, $L$ and
$U$ are strictly lower and strictly upper (block) triangular,
respectively), then the (block) generalized AOR method and the
(block) generalized SOR method reduce to the (block) AOR method and
the (block) SOR method, respectively. Furthermore, if the method
(\ref{1r8}) is the (block) AOR method and $\gamma=0$, then we obtain
the the (block) JOR method.

In this paper, we mainly discuss the convergence of parallel
multisplitting block iterative methods of linear system (\ref{r1}).
The parallel multisplitting iterative methods are investigated in
\cite{{F20},{N19},{OL16},{S17},{S18}}. Let us consider the block
case.

In order to solve the system (\ref{r1}) with parallel multisplitting
block iterative methods, the coefficient matrix $A=[A_{ij}]\in
\complex^{km\times km}$ is split into
\begin{equation}\label{r2}
A=M_s-N_s,\ \ \ \ s=1,2,\cdots,r
\end{equation}
by means of the following block matrices $M_s=[M_{ij}]$ with
\begin{equation}\label{r3}
 M_{ij}= \left\{
\begin{array}{cc}
A_{ij},\ \ & \ \ {\rm if}\ \ \ (i,j)\in Q_s\ and\ i=j\in N\\
0, \ \ & \ \ {\rm if} \ \ (i,j)\notin\ Q_s,\ i\neq j
\end{array}\right.
\end{equation}
and $N_s=[N_{ij}]$ with
 \begin{equation}\label{r4}
N_{ij}= \left\{
\begin{array}{cc}
0,\ \ & \ \ {\rm if}\ \ \ (i,j)\in Q_s\ and\ i=j\in N\\
-A_{ij}, \ \ & \ \ {\rm if} \ \ (i,j)\notin\ Q_s,\ i\neq j
\end{array}\right..
\end{equation}
Here $Q_s\subset P(m)=\{(i,j)\ |\ i,j\in N=\{1,2,\cdots,m\},\ i\neq
j\}$ and each $M_s$ is nonsingular for $s=1,2,\cdots,r$. The
splittng (\ref{r2}) is called a multisplitting of the matrix $A$ and
is denoted by $(M_s,N_s,E_s)_{s=1}^{r}$. Here,
$E_s=diag(e_s^1I_k,e_s^2I_k,\cdots,e_s^mI_k)$ is a $km\times km$
nonnegative diagonal matrix for $s=1,2,\cdots,r$ and
$\sum_{s=1}^{r}E_s=I$, the $km\times km$ identity matrix. It follows
that a parallel multisplitting block iterative form of (\ref{r1})
can be described as follows:
\begin{equation}\label{r5}
x^{(i+1)}=\sum\limits_{s=1}^{r}E_sM_s^{-1}N_sx^{(i)}+\sum\limits_{s=1}^{r}E_sM_s^{-1}b,\
\ \ \ i=1,2,\cdots\cdots
\end{equation}
With $T=\sum_{s=1}^{r}E_sM_s^{-1}N_s$ and calling $T$ the iteration
matrix of the method (\ref{r5}), eq. (\ref{r5}) can be changed into
the following equations:
\begin{equation}\label{r5'}
 \begin{array}{lll}
 x^{(i+1)}&=&\sum\limits_{s=1}^{r}E_sy^{(i)}_s,\ \ \ i=1,2,\cdots\cdots,\\
y^{(i)}_s&=&M_s^{-1}N_sx^{(i)}+M_s^{-1}b \ \ \ s=1,2,\cdots,r.
\end{array}
\end{equation}
Eq. (\ref{r5'}) shows that this multisplitting method has a natural
parallelism, since the calculations of $y^{(i)}_s$ for various
values of $s$ are independent and may therefore be performed in
parallel. Moreover, the $j$th component of $y^{(i)}_s$ need not be
computed if the corresponding diagonal entry of $E_s$ is zero. This
may result in considerable savings of computational time.

If $r=1$, then the multisplitting (\ref{r2}) turns into a single
splitting
\begin{equation}\label{r2'}
A=M_1-N_1,
\end{equation}
and the corresponding block iterative method is a general block
iterative method.

An extrapolated parallel iterative method with a positive
extrapolation parameter $\tau$ is considered in \cite{{S17}} and
\cite{{F20}}. The following gives the extrapolated parallel block
iterative method by the block iteration
\begin{equation}\label{r6}
x^{(i+1)}=\tau\sum\limits_{s=1}^{r}E_sM_s^{-1}(N_sx^{(i)}+b)+(1-\tau)x^{(i)},\
\ \ \ i=1,2,\cdots\cdots
\end{equation}
Its iteration matrix is defined by
$$T(\tau)=\tau\sum\limits_{s=1}^{r}E_sM_s^{-1}N_s+(1-\tau)I.$$

In \cite{{S17}} and \cite{{S18}}, the parallel generalized AOR
(GAOR), block AOR (BAOR) and AOR methods are defined. Let
\begin{equation}\label{r7}
A=D_s-L_s-U_s,\ \ \ \ s=1,2,\cdots,r
\end{equation}
where $D_s\in C^{km\times km}$ is a nonsingular block matrix,
$L_k\in \complex^{km\times km}$ and $U_k\in \complex^{km\times km}$
are not necessarily block triangular in general. Assume that
$det(D_s-\gamma_sL_s)\neq 0,\ \ s=1,2,\cdots,r$. Then the parallel
block GAOR (BGAOR) method is defined by
\begin{equation}\label{r8}
x^{(i+1)}={\cal{L}}(\Gamma,\Omega)x^{(i)}+\sum\limits_{s=1}^{r}E_s(D_s-\gamma_sL_s)^{-1}b,\
\ \ \ i=1,2,\cdots\cdots,
\end{equation}
where
 \begin{equation}\label{r9}
 \begin{array}{lll}
&&{\cal{L}}(\Gamma,\Omega)=\sum\limits_{s=1}^{r}E_s(D_s-\gamma_sL_s)^{-1}[(1-\omega_s)D_s+(\omega_s-\gamma_s)L_s+\omega_sU_s],\\
&& \Gamma=(\gamma_1,\gamma_2,\cdots,\gamma_r),\ \ \ \
\Omega=(\omega_1,\omega_2,\cdots,\omega_r).
\end{array}
\end{equation}
This method may be achieved by the multisplitting (\ref{r2}) with
\begin{equation}\label{r10}
\begin{array}{lll}
{M}_s&=&\displaystyle\frac{1}{\omega_s}(D_s-\gamma_sL_s),\\
{N}_s&=&\displaystyle\frac{1}{\omega_s}[(1-\omega_s)D_s+(\omega_s-\gamma_s)L_s+\omega_sU_s],
\ \ s=1,2,\cdots,r.
\end{array}
\end{equation}

The parallel BGAOR method reduces to the parallel BGSOR (parallel
block generalized SOR) method if the parameter pairs
$(\gamma_s,\omega_s)$ turn into $(\omega_s,\omega_s)$ for
$s=1,2,\cdots,r$ and the parallel BGGS (parallel block generalized
Gauss-Seidel) method if the parameter pairs $(\gamma_s,\omega_s)$
turn into $(\omega_s,\omega_s)$ with $\omega_s=1$ for
$s=1,2,\cdots,r.$ We denote by ${\cal{L}}(\Omega)$ and
${\cal{L}}_{PBGGS}$ the iteration matrices of the parallel BGSOR and
the parallel BGGS methods, respectively.

If the decompositions in (\ref{r7}) are the usual block
decompositions, i.e., $D_s\in \complex^{km\times km}$ is a
nonsingular block diagonal part of $A$, $L_k\in \complex^{km\times
km}$ and $U_k\in \complex^{km\times km}$ are strictly lower and
upper block triangular matrices, respectively, then the parallel
BGAOR and the parallel BGSOR methods reduce to the parallel block
AOR (BAOR) and the parallel block SOR (BSOR) methods, respectively.
Lastly, we denote the iteration matrices of the extrapolated BGAOR
and BGSOR methods by ${\cal{L}}(\Gamma,\Omega,\tau)$ and
${\cal{L}}(\Omega,\tau)$, respectively.

This paper is organized as follows. Some notations and preliminary
results about generalized $H-$matrices are given in Section 2. The
convergence results of parallel block iterative methods for linear
systems with generalized $H-$matrices are established in Section 3.
In what follows, the convergence properties of parallel block
iterative methods for linear systems with special block tridiagonal
matrices arising in special cases from the computations of partial
differential equations are discussed in Section 4 and some examples
are given in Section 5 to illustrate the convergence results
obtained in this paper. Finally, conclusions are given in Section 6.
\section{Preliminaries} \label{prelimi-sec}
In this section we give some notions and preliminary results about
special matrices that are used in this paper. We denote by
$\complex^{n\times n}\ (\mathbb{R}^{n\times n})$ the set of all
$n\times n$ complex (real) matrices; $\complex^n$ the set of all
$n-$dimensional complex vectors; $\mathbb{R}_+^n$ the set of
positive vectors in $\mathbb{R}^n$; $A^T$ the transpose of $A$;
$A^H$ the conjugate transpose of $A$; $\rho(A)$ the spectral radius
of $A$; $Re(z)$ the real part of $z$.

\begin{definition}  {\rm (see \cite{{R.A9}})}
A matrix $A\in \complex^{n\times n}$ is called Hermitian if $A^H=A$;
a Hermitian matrix $A\in \complex^{n\times n}$ is called Hermitian
positive definite if $x^HAx>0$ for all $0\neq x\in \complex^n$ and
Hermitian semipositive definite if $x^HAx\geq 0$ for all $x\in
\complex^n$. A matrix $A\in \complex^{n\times n}$ is called positive
definite if $Re(x^HAx)>0$ for all $0\neq x\in \complex^n$ and
semipositive definite if $Re(x^HAx)\geq 0$ for all $x\in
\complex^n$.
\end{definition}

By $A>0$ and $A\geq 0$ we denote that $A$ is (Hermitian) positive
definite and (Hermitian) semipositive definite. Analogously we write
$A<0$ if $-A>0$ and $A\leq 0$ if $-A\geq 0$. Furthermore, for $A,\
B\in \complex^{n\times n}$, we write $A>B$ and $A\geq B$ if $A-B>0$
and $A-B\geq0$.

\begin{definition}
Let $A=(a_{ij})\in \complex^{n\times n}$. If $A$ is Hermitian, then
$|A|\in \complex^{n\times n}$ is defined as $|A|:=\sqrt{AA}.$
\end{definition}

\begin{definition}  {\rm (see \cite{{L.V5},{R.N11}})}
\begin{enumerate}
\item $Z_m^k=\{A=[A_{ij}]\in \complex^{km\times
km}\ |\ A_{ij}\in \complex^{k\times k}\ is\ Hermitian\ for\ all\
i,j\in N=\{1,2,\cdots,m\}\ and$ $A_{ij}\leq 0\ for\ all\ i\neq j,\
i,j\in N\}$;
\item $\widehat{Z}_m^k=\{A=[A_{ij}]\in Z_m^k\ |\ A_{ii}>0,\
i\in N\}$;
\item $M_m^k=\{A\in \widehat{Z}_m^k\ |\ there\ exists\ u\in \mathbb{R}_+^m\ such\ that\ \sum_{j=1}^{m}u_jA_{ij}>0\ for\ all\
i\in N\}$, where $\mathbb{R}_+^m$ denotes all positive vectors in
$\mathbb{R}^m$, and A matrix $A\in \widehat{Z}_m^k$ is called a
generalized $M-$matrix if $A\in M_m^k$;
\item $D_m^k=\{A=[A_{ij}]\in \complex^{km\times
km}\ |\ A_{ij}\in \complex^{k\times k}\ is\ Hermitian\ for\ all\
i,j\in N$ $and\ A_{ii}>0\ for\ all\ i\in N\}$;
\item $H_m^k=\{A\in D_m^k\ |\ \mu(A)\in M_m^k\}$, where $\mu(A)=[M_{ij}]\in \complex^{mk\times mk}$ is the block comparison matrix of $A$ and is defined as
$$
M_{ij}:=\left\{
\begin{array}{cc}
|A_{ii}|,\ \ & \ \ {\rm if}\ \ i=j\\
-|A_{ij}|, \ \ & \ \ {\rm if} \ \ i\neq j
\end{array} \right.,$$  and A
matrix $A\in D_m^k$ is called a generalized $H-$matrix if $A\in
H_m^k$.
\end{enumerate}
\end{definition}

\section{Main results}\label{main-sec}
In this section we discuss the convergence of parallel
mulitisplitting block iterative methods when the coefficient
matrices are generalized $H-$matrices. The following lemmas will be
used in this section.

\begin{lemma}
Let $A=(a_{ij})\in \complex^{n\times n}$ with a multisplitting
$(M_s,N_s,E_s)_{s=1}^{r}$, and let $T=\sum_{s=1}^{r}E_sM_s^{-1}N_s$
and $\hat{A}=\hat{M}-\hat{N}$, where

\begin{equation}\label{r12}
\begin{array}{llll}
\hat{M}= \left[
 \begin{array}{cccc}
 \ M_1 & 0 & \cdots & 0  \\
 \ 0 & M_2 & \cdots & 0  \\
 \ \vdots & \vdots & \ddots & \vdots  \\
 \ 0 & 0 & \cdots & M_r  \\
 \end{array}
 \right],\
 \hat{N}= \left[
 \begin{array}{cccc}
 \ N_1E_1 & N_1E_2 & \cdots & N_1E_r  \\
 \ N_2 E_1 & N_2E_2 & \cdots & N_2E_r  \\
 \ \vdots & \vdots & \ddots & \vdots  \\
 \ N_r E_1 & N_rE_2 & \cdots & N_rE_r  \\
 \end{array}
 \right].
\end{array}
\end{equation}
Then $\rho(T)=\rho({\hat{M}}^{-1}{\hat{N}})$, where $\rho(T)$
denotes the spectral radius of the matrix $T$.
\end{lemma}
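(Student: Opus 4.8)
The plan is to exhibit both $T$ and $\hat M^{-1}\hat N$ as the two products $BA$ and $AB$ arising from a single rectangular pair $(A,B)$, and then to invoke the classical fact that $AB$ and $BA$ have the same nonzero eigenvalues. The only structural input I need is that $\hat M$ is block diagonal with nonsingular diagonal blocks, so that $\hat M^{-1}=\mathrm{diag}(M_1^{-1},\dots,M_r^{-1})$, together with the observation that $\hat N$ has a factorization as a block column times a block row.

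First I would factor $\hat N$. Setting
$$P=\left[\begin{array}{c} N_1 \\ N_2 \\ \vdots \\ N_r \end{array}\right]\in\complex^{rn\times n},\qquad Q=\left[\begin{array}{cccc} E_1 & E_2 & \cdots & E_r \end{array}\right]\in\complex^{n\times rn},$$
a block multiplication shows that the $(s,t)$ block of $PQ$ is exactly $N_sE_t$, so $\hat N=PQ$ and consequently $\hat M^{-1}\hat N=(\hat M^{-1}P)Q$.

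Next I would name the rectangular factors $\tilde A=\hat M^{-1}P\in\complex^{rn\times n}$ and $\tilde B=Q\in\complex^{n\times rn}$, so that $\hat M^{-1}\hat N=\tilde A\tilde B$. Computing the reverse product and using $\hat M^{-1}=\mathrm{diag}(M_1^{-1},\dots,M_r^{-1})$ gives
$$\tilde B\tilde A=Q\hat M^{-1}P=\left[\begin{array}{cccc} E_1M_1^{-1} & E_2M_2^{-1} & \cdots & E_rM_r^{-1}\end{array}\right]\left[\begin{array}{c} N_1 \\ N_2 \\ \vdots \\ N_r \end{array}\right]=\sum_{s=1}^{r}E_sM_s^{-1}N_s=T.$$
Thus $T=\tilde B\tilde A$ and $\hat M^{-1}\hat N=\tilde A\tilde B$ are precisely the two orderings of the same product.

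Finally I would apply the identity $\lambda^{\,n}\det(\lambda I_{rn}-\tilde A\tilde B)=\lambda^{\,rn}\det(\lambda I_n-\tilde B\tilde A)$, i.e. that $\tilde A\tilde B$ and $\tilde B\tilde A$ share the same nonzero eigenvalues with equal algebraic multiplicities. Hence $T$ and $\hat M^{-1}\hat N$ have identical sets of nonzero eigenvalues, so their maximal moduli agree whenever a nonzero eigenvalue exists, while if every eigenvalue of one equals $0$ the same holds for the other and both spectral radii vanish. In either case $\rho(T)=\rho(\hat M^{-1}\hat N)$, as claimed. The computations are routine; the only step deserving a word of care is this last passage from equal nonzero spectra to equal spectral radii, but since $0$ can be the dominant eigenvalue only when all eigenvalues are $0$, it presents no real obstacle.
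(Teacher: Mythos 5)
Your proof is correct and follows essentially the same route as the paper's: both arguments exhibit $T$ and $\hat{M}^{-1}\hat{N}$ as the two orderings of a single product and invoke the invariance of the nonzero spectrum under reversal of factors. The only cosmetic difference is that you work with the rectangular factors $\tilde{A}=\hat{M}^{-1}P$ and $\tilde{B}=Q$ directly, whereas the paper pads the same factors with zero blocks to form square $rn\times rn$ matrices before applying $\rho(XY)=\rho(YX)$.
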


\begin{proof} \begin{equation}\label{r13}
\begin{array}{llll}
\rho(T)&=& \rho(\sum\limits_{s=1}^{r}E_sM_s^{-1}N_s)\\
&=&\rho\left(\left[
 \begin{array}{cccc}
 \ E_1 & E_2 & \cdots & E_r  \\
 \ 0 & 0 & \cdots & 0  \\
 \ \vdots & \vdots & \ddots & \vdots  \\
 \ 0 & 0 & \cdots & 0  \\
 \end{array}
 \right]\left[
 \begin{array}{cccc}
 \ M_1^{-1}N_1 & 0 & \cdots & 0  \\
 \ M_2^{-1}N_2 & 0 & \cdots & 0  \\
 \ \vdots & \vdots & \ddots & \vdots  \\
 \ M_r^{-1}N_r & 0 & \cdots & 0  \\
 \end{array}
 \right]\right)\\
&=&\rho\left(\left[
 \begin{array}{cccc}
 \ M_1^{-1}N_1 & 0 & \cdots & 0  \\
 \ M_2^{-1}N_2 & 0 & \cdots & 0  \\
 \ \vdots & \vdots & \ddots & \vdots  \\
 \ M_r^{-1}N_r & 0 & \cdots & 0  \\
 \end{array}
 \right]\left[
 \begin{array}{cccc}
 \ E_1 & E_2 & \cdots & E_r  \\
 \ 0 & 0 & \cdots & 0  \\
 \ \vdots & \vdots & \ddots & \vdots  \\
 \ 0 & 0 & \cdots & 0  \\
 \end{array}
 \right]\right)\end{array}
\end{equation}
\begin{eqnarray*}
 &=&\rho\left(\left[
 \begin{array}{cccc}
 \ M_1^{-1}N_1E_1 & M_1^{-1}N_1E_2 & \cdots & M_1^{-1}N_1E_r  \\
 \ M_2^{-1}N_2 E_1 & M_2^{-1}N_2E_2 & \cdots & M_2^{-1}N_2E_r  \\
 \ \vdots & \vdots & \ddots & \vdots  \\
 \ M_r^{-1}N_r E_1 & M_r^{-1}N_rE_2 & \cdots & M_r^{-1}N_rE_r  \\
 \end{array}
 \right]\right)\\
 &=&\rho(\hat{M}^{-1}\hat{N}),
\end{eqnarray*}
where $\hat{M}$ and $\hat{N}$ are defined as in (\ref{r12}). This
completes the proof.
\end{proof}

\begin{lemma}{\rm (see \cite{{zhang21}})}
\label{lem} Let $A=[A_{ij}]\in H_m^k$ with a splitting $A=M_1-N_1$
as in {\rm (\ref{r2'})}. Then $\rho({M_1}^{-1}{N_1})<1$.
\end{lemma}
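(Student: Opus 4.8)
The plan is to tie the (generally non-normal, complex-spectrum) iteration matrix $M_1^{-1}N_1$ to a nonnegative block comparison matrix and then to invoke the generalized $M$-matrix theory. The splitting $A=M_1-N_1$ here is the $r=1$ instance of (\ref{r3})--(\ref{r4}): $M_1$ carries the Hermitian positive definite diagonal blocks $A_{ii}$ together with the off-diagonal blocks $A_{ij}$ having $(i,j)\in Q_1$, while $N_1$ carries the negatives of the remaining off-diagonal blocks. Writing $\mu(\cdot)$ for the block comparison matrix from the definition of $H_m^k$ and $|N_1|$ for the blockwise absolute value, a direct block-by-block check gives the identity $\mu(M_1)-|N_1|=\mu(A)$. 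Since $A\in H_m^k$ we have $\mu(A)\in M_m^k$, and discarding the nonnegative blocks $\sum_{(i,j)\notin Q_1}u_j|A_{ij}|$ from the defining inequality $u_i|A_{ii}|>\sum_{j\neq i}u_j|A_{ij}|$ (in the Hermitian ordering of the paper) shows that $\mu(M_1)\in M_m^k$ as well; that is, $\mu(M_1)$ is a nonsingular generalized $M$-matrix.

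With these identifications the argument splits into two standard pieces. First I would establish $\rho(\mu(M_1)^{-1}|N_1|)<1$. This is precisely the assertion that the weak regular splitting $\mu(A)=\mu(M_1)-|N_1|$ of the nonsingular generalized $M$-matrix $\mu(A)$ is convergent, the block analogue of Varga's regular-splitting theorem. It follows from the Perron--Frobenius structure of generalized $M$-matrices, using the positive vector $u$ to reduce the block relations to a scalar nonnegative eigenvalue comparison.

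Second, I would prove the comparison bound $\rho(M_1^{-1}N_1)\le\rho(\mu(M_1)^{-1}|N_1|)$. The key observation is that for any $\lambda\in\complex$ the matrix $C(\lambda):=\lambda M_1-N_1$ has diagonal blocks $\lambda A_{ii}$ with $|\lambda A_{ii}|=|\lambda|\,|A_{ii}|$, so forming its block comparison matrix with $|C_{ij}|:=(C_{ij}^HC_{ij})^{1/2}$ (which agrees with the definition of $\mu$ on Hermitian blocks) yields the exact identity $\mu(C(\lambda))=|\lambda|\,\mu(M_1)-|N_1|$. If $|\lambda|>\rho(\mu(M_1)^{-1}|N_1|)$, then $|\lambda|\,\mu(M_1)-|N_1|$ is a nonsingular generalized $M$-matrix, so $C(\lambda)$ is block generalized diagonally dominant and hence nonsingular; consequently no eigenvalue $\lambda$ of $M_1^{-1}N_1$ (for which $C(\lambda)$ must be singular) can exceed $\rho(\mu(M_1)^{-1}|N_1|)$ in modulus. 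Combining the two pieces gives $\rho(M_1^{-1}N_1)\le\rho(\mu(M_1)^{-1}|N_1|)<1$.

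The main obstacle I expect to be the second piece, and within it the nonsingularity criterion ``$\mu(C)$ a nonsingular generalized $M$-matrix $\Rightarrow$ $C$ nonsingular'' in the case where the off-diagonal blocks of $C$ are no longer Hermitian. This is where the block-Hermitian estimates are essential: one uses $\|C_{ij}x_j\|_2=\|\,|C_{ij}|\,x_j\|_2$ (valid because $C_{ij}^HC_{ij}=|C_{ij}|^2$) to pass from the null-vector equation $C(\lambda)x=0$ to a majorizing nonnegative system, and then reconciles the per-block vector inequalities with the matrix-level dominance supplied by $u$. Making this reconciliation rigorous, rather than the routine bookkeeping of the first piece, is the technical heart of the proof.
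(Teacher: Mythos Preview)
The paper does not give a proof of this lemma; it is quoted from \cite{zhang21} and used as a black box in the proofs of Theorems~3.3 and~3.5. There is therefore no in-paper argument to compare your proposal against.

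Your outline is nonetheless the natural route and matches the standard shape of such results (and, at the scalar level, the classical $H$-matrix convergence theory): the identity $\mu(A)=\mu(M_1)-|N_1|$ holds block by block, $\mu(M_1)\in M_m^k$ follows by discarding nonnegative summands in the dominance inequality, the induced splitting of the generalized $M$-matrix $\mu(A)$ is convergent, and the comparison $\rho(M_1^{-1}N_1)\le\rho\bigl(\mu(M_1)^{-1}|N_1|\bigr)$ finishes the argument. You have also correctly isolated the only genuinely delicate step, namely the nonsingularity of $\lambda M_1-N_1$ whenever $|\lambda|\,\mu(M_1)-|N_1|\in M_m^k$, in a regime where the off-diagonal blocks $\lambda A_{ij}$ are no longer Hermitian. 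Bridging the per-vector norm identity $\|C_{ij}x_j\|_2=\bigl\|\,|C_{ij}|\,x_j\bigr\|_2$ with the Hermitian-ordering dominance supplied by $u$ does require an extra device (for instance, selecting the index that extremizes $\|x_i\|_2/u_i$ and testing the positive-definiteness inequality against the corresponding unit direction); this is indeed the technical heart, and your plan is on target.
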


\begin{theorem}
Let $A=[A_{ij}]\in H_m^k$ with a multisplitting
$(M_s,N_s,E_s)_{s=1}^{r}$. Then the parallel multisplitting block
iterative method {\rm (\ref{r5})} converges to the unique solution
of {\rm(\ref{r1})} for any choice of the initial guess $x^{(0)}$.
\end{theorem}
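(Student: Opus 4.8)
The plan is to reduce the convergence assertion to the single spectral inequality $\rho(T)<1$. Indeed, for the stationary iteration (\ref{r5}), written $x^{(i+1)}=Tx^{(i)}+\sum_{s=1}^{r}E_sM_s^{-1}b$, the standard convergence theorem for linear iterations guarantees convergence from every $x^{(0)}$ to the unique fixed point precisely when $\rho(T)<1$. I would also record, once $\rho(T)<1$ is in hand, that this fixed point is the solution of (\ref{r1}): substituting $N_s=M_s-A$ and using $\sum_{s=1}^{r}E_s=I$ gives $I-T=\sum_{s=1}^{r}E_sM_s^{-1}A$, so the equation $(I-T)x^\ast=\sum_{s=1}^{r}E_sM_s^{-1}b$ is solved by $x^\ast=A^{-1}b$, and $I-T$ nonsingular makes it the unique solution.

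First I would apply the first lemma above to pass from the multisplitting iteration matrix to an ordinary one: $\rho(T)=\rho(\hat M^{-1}\hat N)$ with $\hat M,\hat N$ as in (\ref{r12}). The idea is then to invoke Lemma \ref{lem} for the $rkm\times rkm$ matrix $\hat A=\hat M-\hat N$, regarded as an $rm\times rm$ block matrix with $k\times k$ blocks indexed by pairs $(s,i)$. Two routine checks precede this: that $\hat A=\hat M-\hat N$ has the form (\ref{r2'}) (the diagonal blocks of $\hat A$ are $(M_s)_{ii}=A_{ii}>0$ since each $N_s$ has vanishing diagonal, and on every off-diagonal position either $\hat M$ reproduces $\hat A$ or $\hat M=0$ forces $\hat N=-\hat A$), and that $\hat M$ is nonsingular, being block diagonal with the nonsingular factors $M_s$.

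\textbf{The crux.} The substantive step is the claim $\hat A\in H_{rm}^k$, and I would establish it by \emph{replicating} the defining vector of $A$. Let $u\in\mathbb{R}_+^m$ witness $\mu(A)\in M_m^k$, i.e. $u_i|A_{ii}|-\sum_{j\neq i}u_j|A_{ij}|>0$ for all $i$, and set $\hat u_{(s,i)}:=u_i$. From $(\hat A)_{(s,i),(t,j)}=\delta_{st}(M_s)_{ij}-(N_s)_{ij}e_t^j$ and the scaling rule $|cB|=c|B|$ for Hermitian $B$ and $c\geq0$, the $\hat u$-weighted off-diagonal mass of the $(s,i)$ block-row of $\mu(\hat A)$ separates into the positions $(i,j)\in Q_s$, contributing $\sum_{j:\,(i,j)\in Q_s}u_j|A_{ij}|$ with coefficient $1$, and the positions $(i,j)\notin Q_s$, which appear with coefficient $e_t^j$ for every $t$. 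The decisive observation is that the partition-of-unity property $\sum_{s=1}^{r}E_s=I$, i.e. $\sum_{t=1}^{r}e_t^j=1$, collapses the latter to $\sum_{j:\,(i,j)\notin Q_s}u_j|A_{ij}|$; the two pieces recombine to $\sum_{j\neq i}u_j|A_{ij}|$, independently of $s$ and of the choice of $Q_s$. Hence the $\hat u$-weighted block-row sum of $\mu(\hat A)$ equals $u_i|A_{ii}|-\sum_{j\neq i}u_j|A_{ij}|>0$, which is exactly the condition $\mu(\hat A)\in M_{rm}^k$, so $\hat A\in H_{rm}^k$.

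With the claim proved, Lemma \ref{lem} gives $\rho(\hat M^{-1}\hat N)<1$, and the first lemma then yields $\rho(T)=\rho(\hat M^{-1}\hat N)<1$, completing the argument. I expect the crux to be the main obstacle: one must correctly account for the three families of off-diagonal blocks of $\hat A$ (within-block entries in $Q_s$, within-block entries outside $Q_s$, and cross-block entries) and recognize that the weights $e_t^j$ reassemble, through $\sum_{t}e_t^j=1$, into precisely the diagonal-dominance quantity of $A$, so that no hypothesis beyond $A\in H_m^k$ is consumed.
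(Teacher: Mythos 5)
Your proposal is correct and follows essentially the same route as the paper: reduce to $\rho(T)<1$ via the first lemma, show $\hat A=\hat M-\hat N\in H_{rm}^k$ by replicating the witness vector (the paper's $\hat F=\mathrm{diag}(F,\dots,F)$) and using $\sum_{s}E_s=I$ to reassemble the diagonal-dominance quantity of $A$, then apply Lemma \ref{lem}. Your bookkeeping with the column weights $e_t^j$ (reflecting that $E_t$ multiplies $N_s$ on the right) and your explicit identification of the fixed point as $A^{-1}b$ are in fact slightly more careful than the paper's own computation in (\ref{r16}).
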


\begin{proof} We only prove that $\rho(T)<1$. Lemma 3.1 shows that $\rho(T)=\rho({\hat{M}}^{-1}{\hat{N}})$,
where $\hat{M}$ and $\hat{N}$ are defined as in (\ref{r12}). Since
$A\in H_m^k$ indicates $\mu(A)\in M_m^k,$ it follows from Definition
2.3 that there exists a positive diagonal matrix
$F=diag(f_1I_k,f_2I_k,\cdots,f_mI_k)$, where $I_k$ is the $k\times
k$ identity matrix, such that $AF$ satisfies
\begin{equation}\label{r14}
f_i|A_{ii}|-\sum\limits_{j=1,j\neq i}^{m}|A_{ij}|f_j>0,
\end{equation}
for all $i\in N$. Note that $(M_s,N_s,E_s)_{s=1}^{r}$ is a
multisplitting of $A$, $E_s=diag(e_s^1I_k,\cdots,e_s^mI_k)$ is a
$km\times km$ nonnegative diagonal matrix for $s=1,2,\cdots,r$ and
$\sum_{s=1}^{r}E_s=I$, the $km\times km$ identity matrix. Then we
have
\begin{equation}\label{r15}
\sum\limits_{s=1}^{r}e_s^i=1,\ i=1,2,\cdots,m\ \ and\ \ e_s^i\geq
0,\ s=1,2,\cdots,r.
\end{equation}
As a result, $A=M_s-N_s=M_s-N_s\sum_{s=1}^{r}E_s\in H_m^k$
satisfying (\ref{r14}) for all $s=1,2,\cdots,r$. Following
(\ref{r14}) and (\ref{r15}), we have that for $s=1,2,\cdots,r,$
\begin{equation}\label{r16}
\begin{array}{llll}
&&\left[f_i|A_{ii}|-\sum\limits_{(i,j)\in
Q_s}|A_{ij}|f_j\right]-\sum\limits_{s=1}^{r}\left[\sum\limits_{(i,j)\underline{\in}Q_s;j\neq i}|A_{ij}|f_j\right]e_s^i\\
&&=f_i|A_{ii}|-\sum\limits_{(i,j)\in Q_s}|A_{ij}|f_j
-\sum\limits_{(i,j)\underline{\in}Q_s;j\neq i}(\sum\limits_{s=1}^{r}|A_{ij}|e_s^i)f_j\\
&&=f_i|A_{ii}|-\left[\sum\limits_{(i,j)\in
Q_s}|A_{ij}|f_j+\sum\limits_{
(i,j)\underline{\in}Q_s;j\neq i}|A_{ij}|f_j\right]\\
&&>0,\ \ \ \ i=1,2,\cdots,m.
\end{array}
\end{equation}
Thus, there exists a positive diagonal matrix
$\hat{F}=diag(F,F,\cdots,F)$ such that $\hat{A}\hat{F}$ satisfies
(\ref{r16}) for $i=1,2,\cdots,m\ and\ s=1,2,\cdots,r,$ which shows
that $\hat{A}\in H_{rm}^k$. From (\ref{r12}), we know that
$\hat{A}=\hat{M}-\hat{N}$ is a splitting  as in (\ref{r2'}). It then
follows from Lemma 3.2 that $\rho(T)=\rho({M_Q}^{-1}{N_Q})<1$ which
completes the proof.
\end{proof}

\begin{theorem} Let $A=[A_{ij}]\in H_m^k$ with a
multisplitting $(M_s,N_s,E_s)_{s=1}^{r}$. Then the extrapolated
parallel multisplitting block iterative method {\rm (\ref{r6})}
converges to the unique solution of {\rm(\ref{r1})} for any choice
of the initial guess $x^{(0)}$, provided $\tau\in (0,2/(1+\rho))$,
where $\rho=\rho(T)$ and $T$ is the iteration matrix of the method
(\ref{r5}).
\end{theorem}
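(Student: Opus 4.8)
The plan is to reduce convergence to the spectral criterion $\rho(T(\tau)) < 1$ and then to exploit the fact that the extrapolated iteration matrix $T(\tau) = \tau T + (1-\tau)I$ is a degree-one polynomial in the base iteration matrix $T = \sum_{s=1}^{r} E_s M_s^{-1} N_s$ of the method (\ref{r5}). By the spectral mapping theorem for polynomials, the spectrum of $T(\tau)$ is the image of the spectrum of $T$ under the scalar affine map $\lambda \mapsto \mu = (1-\tau) + \tau\lambda$; that is, every eigenvalue $\mu$ of $T(\tau)$ has this form for some eigenvalue $\lambda$ of $T$, and conversely. Since $A \in H_m^k$, Theorem 3.3 guarantees that the base method (\ref{r5}) converges, so $\rho = \rho(T) < 1$, and hence every eigenvalue $\lambda$ of $T$ satisfies $|\lambda| \le \rho < 1$.

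With this in hand, I would first estimate $|\mu|$ for an arbitrary eigenvalue $\lambda$ of $T$. Because $\tau > 0$, the triangle inequality gives $|\mu| = |(1-\tau) + \tau\lambda| \le |1-\tau| + \tau|\lambda| \le |1-\tau| + \tau\rho$, so it suffices to verify the scalar inequality $|1-\tau| + \tau\rho < 1$ on the prescribed interval $\tau \in (0, 2/(1+\rho))$. This I would establish by a short case analysis on the position of $\tau$ relative to $1$. For $0 < \tau \le 1$ one has $|1-\tau| + \tau\rho = 1 - \tau(1-\rho) < 1$, since $\rho < 1$ and $\tau > 0$. For $1 < \tau < 2/(1+\rho)$ one has $|1-\tau| + \tau\rho = \tau(1+\rho) - 1 < 1$, where the final strict inequality is exactly the defining bound $\tau < 2/(1+\rho)$. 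In both cases the estimate is strict, so $|\mu| < 1$ for every eigenvalue of $T(\tau)$, which yields $\rho(T(\tau)) < 1$.

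Once $\rho(T(\tau)) < 1$ is secured, convergence of the iteration (\ref{r6}) to the unique solution of (\ref{r1}) for any initial guess $x^{(0)}$ follows from the standard spectral-radius criterion for stationary iterative schemes, together with the fact that the exact solution is a fixed point of the extrapolated iteration (consistency). I do not expect a genuine obstacle: the only delicate point is that the eigenvalues $\lambda$ of $T$ may be complex, so one cannot argue purely on the real axis, but the triangle-inequality bound handles complex $\lambda$ uniformly. Geometrically, the content is simply that the affine map sends the disk $\{\,|\lambda| \le \rho\,\}$ into the disk of center $1-\tau$ and radius $\tau\rho$, which lies inside the open unit disk precisely when $|1-\tau| + \tau\rho < 1$; the two-case verification of this inequality is the main (and essentially only) computational step.
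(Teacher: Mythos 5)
Your proof is correct and follows essentially the same route as the paper: both reduce to the bound $\rho(T(\tau))\leq \tau\rho(T)+|1-\tau|$ via the affine dependence of $T(\tau)$ on $T$, invoke Theorem 3.3 for $\rho(T)<1$, and conclude $\rho(T(\tau))<1$ on $\tau\in(0,2/(1+\rho))$. If anything, you are slightly more careful than the paper, since you justify the spectral bound via the spectral mapping theorem and verify the scalar inequality $|1-\tau|+\tau\rho<1$ explicitly by cases, whereas the paper asserts these steps without detail.
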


\begin{proof}
Since the iteration matrix of the extrapolated parallel
multisplitting block iterative method is
$$T(\tau)=\tau\sum\limits_{s=1}^{r}E_sM_s^{-1}N_s+(1-\tau)I=\tau
T+(1-\tau)I,$$ we have $\rho(T(\tau))=\rho(\tau T+(1-\tau)I)\leq
\tau\rho(T)+|1-\tau|$. Theorem 3.3 implies that $\rho(T)<1$. As a
result, $\rho(T(\tau))\leq \tau\rho(T)+|1-\tau|<1$ for all $\tau\in
(0,2/(1+\rho))$. Thus, the extrapolated parallel multisplitting
block iterative method converges to the unique solution of
{\rm(\ref{r1})} for any choice of the initial guess $x^{(0)}$. This
completes the proof.
\end{proof}

In what follows, we consider convergence of the parallel BGAOR
iterative method of the system (\ref{r1}).

\begin{theorem} Let $A=[A_{ij}]\in H_m^k$ with a
multisplitting {\rm (\ref{r7})}. If\ $0\leq
\gamma_s\leq\omega_s\leq1$ and $0<\omega_s$ for $s=1,2,\cdots,r,$
then the parallel BGAOR iterative method {\rm (\ref{r8})} converges
to the unique solution of {\rm(\ref{r1})} for any choice of the
initial guess $x^{(0)}$.
\end{theorem}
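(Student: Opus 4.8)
The plan is to recognize the parallel BGAOR iteration as a particular multisplitting of $A$ and then re-run the argument of Theorem 3.3. By (\ref{r10}) the method (\ref{r8}) is exactly the multisplitting block iteration (\ref{r5}) associated with
\[
M_s=\frac{1}{\omega_s}(D_s-\gamma_sL_s),\qquad N_s=\frac{1}{\omega_s}[(1-\omega_s)D_s+(\omega_s-\gamma_s)L_s+\omega_sU_s],
\]
so its iteration matrix is $\mathcal{L}(\Gamma,\Omega)=\sum_{s=1}^{r}E_sM_s^{-1}N_s$. Hence Lemma 3.1 gives $\rho(\mathcal{L}(\Gamma,\Omega))=\rho(\hat{M}^{-1}\hat{N})$ with $\hat{M},\hat{N}$ formed from these $M_s,N_s$ as in (\ref{r12}), and by Lemma 3.2 it suffices to prove $\hat{A}=\hat{M}-\hat{N}\in H_{rm}^k$.

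First I would record the key splitting identity for each $s$: under $0\leq\gamma_s\leq\omega_s\leq1$ and $\omega_s>0$ the three coefficients $1-\omega_s$, $\omega_s-\gamma_s$, $\omega_s$ are all nonnegative, and since $D_s$ carries the block diagonal $A_{ii}$ while $L_s,U_s$ carry the off-diagonal blocks, a blockwise triangle-inequality computation yields $\mu(M_s)-|N_s|=\mu(A)$; explicitly $|{(M_s)}_{ii}|=\omega_s^{-1}|A_{ii}|$, $|{(N_s)}_{ii}|=\omega_s^{-1}(1-\omega_s)|A_{ii}|$, and $|{(M_s)}_{ij}|+|{(N_s)}_{ij}|=|A_{ij}|$ for $i\neq j$. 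Because $A\in H_m^k$, Definition 2.3 supplies the positive diagonal $F=\mathrm{diag}(f_1I_k,\dots,f_mI_k)$ satisfying (\ref{r14}); I would take $\hat{F}=\mathrm{diag}(F,\dots,F)$ as the scaling for $\hat{A}$.

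The core step is then to verify, row by row, the generalized block diagonal dominance that places $\hat{A}$ in $H_{rm}^k$. Indexing the rows of $\hat{A}$ by pairs $(s,i)$, its diagonal block is $\omega_s^{-1}(1-(1-\omega_s)e_s^i)A_{ii}$ (which is Hermitian positive definite, so its magnitude is the positive scalar factor times $|A_{ii}|$), while the off-diagonal blocks are $(M_s)_{ij}-(N_s)_{ij}e_s^j$ within the same $s$ and $-(N_s)_{ij}e_t^j$ for $t\neq s$. Summing the $\hat{F}$-scaled magnitudes of the off-diagonal blocks and using $\sum_{t=1}^{r}e_t^j=1$ with $e_t^j\geq0$ collapses the $N_s$-contributions to $\sum_{j\neq i}|{(N_s)}_{ij}|f_j$ plus the diagonal remainder $\omega_s^{-1}(1-\omega_s)(1-e_s^i)|A_{ii}|f_i$; combining this with $|{(M_s)}_{ij}|+|{(N_s)}_{ij}|=|A_{ij}|$ and cancelling the $e_s^i$-terms against the diagonal scalar factor reduces the required strict inequality to precisely $f_i|A_{ii}|>\sum_{j\neq i}|A_{ij}|f_j$, i.e.\ (\ref{r14}). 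This shows $\hat{A}\in H_{rm}^k$, whence Lemma 3.2 gives $\rho(\hat{M}^{-1}\hat{N})<1$ and therefore $\rho(\mathcal{L}(\Gamma,\Omega))<1$, yielding the claimed convergence for every $x^{(0)}$.

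The main obstacle I anticipate is the two pieces of comparison-matrix bookkeeping: establishing the identity $\mu(M_s)-|N_s|=\mu(A)$ for the generalized (non-triangular) AOR splitting, and then tracking the mixed sum over both the block index $j$ and the multisplitting index $t$ so that the weights $e_t^j$ telescope and the leftover diagonal term cancels exactly against the scalar factor of the diagonal block. The parameter hypotheses $0\leq\gamma_s\leq\omega_s\leq1$ and $\omega_s>0$ enter only through the nonnegativity of the three AOR coefficients, which is exactly what makes the triangle inequalities sharp and keeps every diagonal block Hermitian positive definite; were any coefficient negative, the splitting identity, and hence the clean reduction to (\ref{r14}), would break down.
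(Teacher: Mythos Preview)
Your proposal is correct and follows essentially the same route as the paper's proof of Theorem~3.5: recognize the parallel BGAOR iteration as a multisplitting, invoke Lemma~3.1 to pass to $\hat A=\hat M-\hat N$, verify $\hat A\in H_{rm}^k$ via the same block scaling $\hat F=\mathrm{diag}(F,\dots,F)$, and conclude by Lemma~3.2. The only noteworthy difference is scope: you assume $D_s$ is purely block diagonal (so that $L_s,U_s$ carry all off-diagonal blocks and the clean identity $\mu(M_s)-|N_s|=\mu(A)$ holds), whereas the paper allows $D_s$ itself to carry some off-diagonal blocks (the set $R_s$ in (\ref{xr3})) and accordingly organizes the dominance estimate around the grouped quantity $f_i|A_{ii}|-\sum_{(i,j)\in R_s}|A_{ij}|f_j$ rather than your splitting identity; the two calculations collapse to the same inequality (\ref{r14}) in the end.
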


\begin{proof}
Since the parallel BGAOR iterative method {\rm (\ref{r8})} is
induced by the multisplitting $(M_s,N_s,E_s)_{s=1}^{r}$ defined in
(\ref{r2}) with
\begin{equation}\label{add1}
\begin{array}{lll}
{M}_s&=&\displaystyle\frac{1}{\omega_s}(D_s-\gamma_sL_s),\\
{N}_s&=&\displaystyle\frac{1}{\omega_s}[(1-\omega_s)D_s+(\omega_s-\gamma_s)L_s+\omega_sU_s],
\ \ s=1,2,\cdots,r,
\end{array}
\end{equation}
it follows from Lemma 3.1 that
$\rho({\cal{L}}(\Gamma,\Omega))=\rho(\sum_{s=1}^{r}E_sM_s^{-1}N_s)=\rho({\hat{M}}^{-1}{\hat{N}})$,
where $\hat{M}$ and $\hat{N}$ are defined as (\ref{r12}). Following,
we will prove that $\hat{A}={\hat{M}}-{\hat{N}}$ is a generalized
$H-$matrix. Let $R_s,S_s,T_s\subset P(m)=\{(i,j)\ |\ i,j\in
N=\{1,2,\cdots,m\},\ i\neq j\}$, $R_s\cap S_s=R_s\cap T_s=T_s\cap
S_s=\emptyset$ and $R_s\cup S_s\cup T_s=P(m)$. Then for
$s=1,2,\cdots,r,$ $D_s=[D_{ij}]\in \complex^{km\times km}$,
$L_s=[L_{ij}]\in \complex^{km\times km}$ and $U_s=[U_{ij}]\in
\complex^{km\times km}$ in (\ref{add1}) are defined by
\begin{equation}\label{xr3}
\begin{array}{llllll}
 D_{ij}&=& \left\{
\begin{array}{cc}
A_{ij},\ \ & \ \ \ \ (i,j)\in R_s\ and\ i=j\in N\\
0, \ & (i,j)\overline{\in}\ R_s,\ i\neq j  \ \ \ \ \ \
\end{array}\right.\\
L_{ij}&=& \left\{
\begin{array}{cc}
A_{ij},\ \ & \ \ \ \ \ (i,j)\in S_s\\
0, \ \ & \ \ \ \ (i,j)\overline{\in}\ S_s,
\end{array}\right.,\\
U_{ij}&=& \left\{
\begin{array}{cc}
A_{ij},\ \ & \ \ \ \ \ (i,j)\in T_s\\
0, \ \ & \ \  \ \ (i,j)\overline{\in}\ T_s
\end{array}\right..
\end{array}
\end{equation}

Since $A\in H_m^k$ indicates $\mu(A)\in M_m^k,$ Definition 2.3 shows
that there exists a positive diagonal matrix
$F=diag(f_1I_k,f_2I_k,\cdots,f_mI_k)$, where $I_k$ is the $k\times
k$ identity matrix, such that $AF$ satisfies
\begin{equation}\label{xr14}
f_i|A_{ii}|-\sum\limits_{j=1,j\neq i}^{m}|A_{ij}|f_j>0,
\end{equation}
for all $i\in N$. Note that $(M_s,N_s,E_s)_{s=1}^{r}$ is a
multisplitting of $A$, $E_s=diag(e_s^1I_k,\cdots,e_s^mI_k)$ is a
$km\times km$ nonnegative diagonal matrix for $s=1,2,\cdots,r$ and
$\sum_{s=1}^{r}E_s=I$, the $km\times km$ identity matrix. Then we
have
\begin{equation}\label{xr15}
\sum\limits_{s=1}^{r}e_s^i=1,\ i=1,2,\cdots,m\ \ and\ \ e_s^i\geq
0,\ s=1,2,\cdots,r.
\end{equation}
As a result, $A=M_s-N_s=M_s-N_s\sum_{s=1}^{r}E_s\in H_m^k$
satisfying (\ref{xr14}) for all $s=1,2,\cdots,r$. Let
$\hat{A}=[\hat{A}_{ij}]\in C_{rm}^k$. Since $0\leq
\gamma_s\leq\omega_s\leq1$ and $0<\omega_s$ for $s=1,2,\cdots,r,$ it
follows from (\ref{xr14}) and (\ref{xr15}) that
\begin{equation}\label{xr16}
\begin{array}{llll}
&&f_i|\hat{A}_{ii}|-\sum\limits_{s=1}^r\sum\limits_{j=1,j\neq
i}^m|\hat{A}_{i,(s-1)m+j}|f_j\\&\geq&\left[\left(f_i|A_{ii}|-\sum\limits_{(i,j)\in
R_s;j\neq i}|A_{ij}|f_j\right)-\gamma_s\sum\limits_{(i,j)\in
S_s}|A_{ij}|f_j\right]\\&&~~~~-\sum\limits_{s=1}^{r}\left[(1-\omega_s)\left(f_i|A_{ii}|-\sum\limits_{(i,j)\in
R_s;j\neq
i}|A_{ij}|f_j\right)\right.\\&&~~~~~~~\left.+(\omega_s-\gamma_s)\sum\limits_{(i,j)\in
S_s}|A_{ij}|f_j+\omega_s\sum\limits_{
(i,j)\underline{\in}T_s}|A_{ij}|f_j\right]e_s^i\\
&=&\left[\left(f_i|A_{ii}|-\sum\limits_{(i,j)\in R_s;j\neq
i}|A_{ij}|f_j\right)-\gamma_s\sum\limits_{(i,j)\in
S_s}|A_{ij}|f_j\right]\\&&~~~~-\left[(1-\omega_s)\left(f_i|A_{ii}|-\sum\limits_{(i,j)\in
R_s;j\neq
i}|A_{ij}|f_j\right)\right.\\&&~~~~~~~\left.+(\omega_s-\gamma_s)\sum\limits_{(i,j)\in
S_s}|A_{ij}|f_j+\omega_s\sum\limits_{
(i,j)\underline{\in}T_s}|A_{ij}|f_j\right]\\
&=&\omega_s\left[f_i|A_{ii}|-\sum\limits_{(i,j)\in R_s;j\neq
i}|A_{ij}|f_j-\sum\limits_{(i,j)\in S_s}|A_{ij}|f_j-\sum\limits_{
(i,j)\underline{\in}T_s}|A_{ij}|f_j\right]\\
&=&f_i|A_{ii}|-\sum\limits_{j=1,j\neq i}^{m}|A_{ij}|f_j\\
&>&0,\ \ \ \ i=1,2,\cdots,m; s=1,2,\cdots,r.
\end{array}
\end{equation} Therefore,
there exists a positive diagonal matrix $\hat{F}=diag(F,F,\cdots,F)$
such that $\hat{A}\hat{F}$ satisfies (\ref{xr16}) for
$i=1,2,\cdots,m\ and\ s=1,2,\cdots,r,$ which shows that $\hat{A}\in
H_{rm}^k$. (\ref{r12}) shows that $\hat{A}=\hat{M}-\hat{N}$ is a
splitting as in (\ref{r2'}). It then follows from Lemma 3.2 that
$\rho({\cal{L}}(\Gamma,\Omega)=\rho(\sum_{s=1}^{r}E_sM_s^{-1}N_s)=\rho({\hat{M}}^{-1}{\hat{N}})<1$
which completes the proof.
\end{proof}

It is easy to obtain immediately the following corollaries from
Theorem 3.5.

\begin{corollary} Let $A=[A_{ij}]\in H_m^k$ with a
multisplitting {\rm (\ref{r7})}. If $0\leq
\gamma_s\leq\omega_s\leq1$ and $0<\omega_s$ for $s=1,2,\cdots,r,$
then the parallel BAOR iterative method converges to the unique
solution of {\rm(\ref{r1})} for any choice of the initial guess
$x^{(0)}$.
\end{corollary}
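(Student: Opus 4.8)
The plan is to observe that the parallel BAOR method is nothing but the parallel BGAOR method of Theorem 3.5 applied to the \emph{standard} block decomposition of $A$, so the corollary should follow by specialization rather than by a fresh argument. In other words, I would not re-derive any inequality; I would exhibit BAOR as an admissible instance of BGAOR and quote the theorem.

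Concretely, the parallel BAOR method arises from (\ref{r7}) when each $D_s$ is taken to be the (nonsingular) block diagonal part of $A$ and $L_s$, $U_s$ are the strictly lower and strictly upper block triangular parts, respectively. In the language of the index sets $R_s,S_s,T_s$ used in the proof of Theorem 3.5, this is exactly the choice
$$R_s=\emptyset,\quad S_s=\{(i,j)\in P(m)\ |\ i>j\},\quad T_s=\{(i,j)\in P(m)\ |\ i<j\},$$
for every $s=1,2,\cdots,r$. I would first check that these sets are pairwise disjoint and satisfy $R_s\cup S_s\cup T_s=P(m)$, which is immediate, so that they meet the structural requirements imposed in Theorem 3.5.

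With this identification in hand, the hypotheses of the corollary, namely $0\leq\gamma_s\leq\omega_s\leq1$ and $0<\omega_s$ for $s=1,2,\cdots,r$, coincide verbatim with those of Theorem 3.5. I would therefore invoke Theorem 3.5 directly to conclude that the iteration matrix ${\cal{L}}(\Gamma,\Omega)$ of the parallel BAOR method satisfies $\rho({\cal{L}}(\Gamma,\Omega))<1$, whence the method converges to the unique solution of (\ref{r1}) for any initial guess $x^{(0)}$.

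I do not anticipate a genuine obstacle: the entire content is the reduction of BAOR to BGAOR, and the only point requiring (trivial) verification is that the standard block decomposition is an instance of the generalized splitting (\ref{r7}) admissible in Theorem 3.5. In particular, no new estimate in the spirit of (\ref{xr16}) is needed, since that key inequality has already been established at the greater generality of BGAOR, and the passage $A\in H_m^k\Rightarrow\widehat{A}\in H_{rm}^k\Rightarrow\rho<1$ carries over unchanged.
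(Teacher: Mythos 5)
Your proposal is correct and matches the paper's own treatment: the paper states that this corollary follows immediately from Theorem 3.5, precisely because the parallel BAOR method is the instance of the parallel BGAOR method obtained from the standard block decomposition. Your explicit identification of the index sets $R_s$, $S_s$, $T_s$ just makes the (intended) specialization argument slightly more detailed than the paper's one-line justification.
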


\begin{corollary} Let $A=[A_{ij}]\in H_m^k$ with a
multisplitting {\rm (\ref{r7})}. If $0<\omega_s\leq1$ for
$s=1,2,\cdots,r,$ then the parallel BGSOR and BSOR iterative method
 converges to the unique solution of {\rm(\ref{r1})}
for any choice of the initial guess $x^{(0)}$.
\end{corollary}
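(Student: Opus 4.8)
The plan is to derive both convergence statements directly from Theorem 3.5 by recognizing the parallel BGSOR and the parallel BSOR methods as special cases of the parallel BGAOR method. Recall from the discussion following (\ref{r9}) that the parallel BGSOR method is precisely the parallel BGAOR method obtained when each parameter pair $(\gamma_s,\omega_s)$ is replaced by $(\omega_s,\omega_s)$, i.e., when $\gamma_s=\omega_s$ for $s=1,2,\cdots,r$. Hence the iteration matrix ${\cal{L}}(\Omega)$ of the parallel BGSOR method equals ${\cal{L}}(\Gamma,\Omega)$ evaluated at $\Gamma=\Omega$, and its convergence is governed by the same spectral-radius estimate used in Theorem 3.5.

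First I would verify that the hypothesis of the corollary is exactly the specialization of the hypothesis of Theorem 3.5. Setting $\gamma_s=\omega_s$, the condition $0\leq\gamma_s\leq\omega_s\leq1$ with $0<\omega_s$ collapses to $0<\omega_s\leq1$, which is precisely the assumption made here. Consequently every step in the proof of Theorem 3.5 carries over verbatim: the associated block matrices $\hat{M}$ and $\hat{N}$ from (\ref{r12}) satisfy $\hat{A}=\hat{M}-\hat{N}\in H_{rm}^k$ via the estimate (\ref{xr16}), in which the term $(\omega_s-\gamma_s)\sum|A_{ij}|f_j$ now simply vanishes, and Lemma 3.2 combined with Lemma 3.1 yields $\rho({\cal{L}}(\Omega))=\rho(\hat{M}^{-1}\hat{N})<1$. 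This establishes convergence of the parallel BGSOR method to the unique solution of (\ref{r1}) for any initial guess $x^{(0)}$.

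Next I would obtain the BSOR case as a further specialization. As noted after (\ref{r9}), when the decompositions in (\ref{r7}) are the usual block decompositions --- $D_s$ the nonsingular block diagonal part of $A$, and $L_s$, $U_s$ the strictly lower and upper block triangular parts --- the parallel BGSOR method reduces to the parallel BSOR method. Since this amounts only to a particular choice of the index sets $R_s,S_s,T_s$ appearing in (\ref{xr3}), the same application of Theorem 3.5 (with $\gamma_s=\omega_s$) applies without change, giving convergence of the parallel BSOR method under $0<\omega_s\leq1$.

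The argument involves no genuine obstacle: its entire content is the observation that imposing $\gamma_s=\omega_s$ turns the AOR-type splitting into the SOR-type splitting while preserving the property that $\hat{A}$ is a generalized $H$-matrix. The only point demanding a moment's care is confirming that the parameter constraints of Theorem 3.5 degenerate correctly under $\gamma_s=\omega_s$ --- in particular that no strictness is lost --- so that the hypothesis $0<\omega_s\leq1$ is neither weaker nor stronger than what Theorem 3.5 requires.
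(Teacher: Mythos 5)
Your proposal is correct and follows exactly the route the paper intends: the paper gives no separate proof, stating only that the corollary follows immediately from Theorem 3.5, and your specialization $\gamma_s=\omega_s$ (so that $0\leq\gamma_s\leq\omega_s\leq1$, $0<\omega_s$ collapses to $0<\omega_s\leq1$) together with the observation that BSOR is the standard block-decomposition instance is precisely that argument.
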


\begin{theorem} Let $A=[A_{ij}]\in H_m^k$ with a
multisplitting {\rm (\ref{r7})}. If $0\leq
\gamma_s\leq\omega_s\leq1$ and $0<\omega_s$ for $s=1,2,\cdots,r,$
then the extrapolated parallel BGAOR iterative method converges to
the unique solution of {\rm(\ref{r1})} for any choice of the initial
guess $x^{(0)}$.
\end{theorem}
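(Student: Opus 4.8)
The plan is to obtain this as a direct specialization of Theorem 3.4, using Theorem 3.5 to verify the single hypothesis that Theorem 3.4 requires. First I would recall that, as established in the proof of Theorem 3.5, the parallel BGAOR method is exactly the parallel multisplitting block method (\ref{r5}) associated with the multisplitting $(M_s,N_s,E_s)_{s=1}^{r}$ of (\ref{r2}) whose factors $M_s,N_s$ are given by (\ref{r10}) (equivalently (\ref{add1})); its iteration matrix is $\mathcal{L}(\Gamma,\Omega)=\sum_{s=1}^{r}E_sM_s^{-1}N_s$. Consequently the extrapolated parallel BGAOR method is nothing but the extrapolated scheme (\ref{r6}) built from this same multisplitting, and its iteration matrix is $\mathcal{L}(\Gamma,\Omega,\tau)=\tau\,\mathcal{L}(\Gamma,\Omega)+(1-\tau)I$.

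Second, I would note that the hypotheses $0\leq\gamma_s\leq\omega_s\leq1$ and $0<\omega_s$ for $s=1,2,\cdots,r$ are precisely those of Theorem 3.5, which therefore applies and gives $\rho(\mathcal{L}(\Gamma,\Omega))<1$. This is exactly the condition $\rho(T)<1$ needed to invoke Theorem 3.4 with the base iteration matrix $T:=\mathcal{L}(\Gamma,\Omega)$.

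Third, I would apply Theorem 3.4 verbatim to this $T$. Writing $\rho:=\rho(\mathcal{L}(\Gamma,\Omega))<1$, Theorem 3.4 yields $\rho(\mathcal{L}(\Gamma,\Omega,\tau))\leq\tau\rho+|1-\tau|<1$ for every $\tau\in(0,2/(1+\rho))$, whence the extrapolated parallel BGAOR iteration converges to the unique solution of (\ref{r1}) for any initial guess $x^{(0)}$.

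I expect no real obstacle here, since the argument is a two-line composition of results already in hand; the only point worth flagging is bookkeeping rather than mathematics. The theorem as stated omits the admissible range of the extrapolation parameter, so in the write-up I would make the restriction $\tau\in(0,2/(1+\rho))$ with $\rho=\rho(\mathcal{L}(\Gamma,\Omega))$ explicit, since this is exactly what Theorem 3.4 supplies and some such condition on $\tau$ is genuinely needed for convergence.
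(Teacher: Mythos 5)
Your proof is correct and follows exactly the route the paper takes: the paper's own proof of this theorem is the one-line remark that it follows from Theorem 3.5 by the same argument as Theorem 3.4, which is precisely your composition of the two results. Your observation that the statement should explicitly carry the restriction $\tau\in(0,2/(1+\rho))$ inherited from Theorem 3.4 is a fair and worthwhile clarification.
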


\begin{proof}
Similar to the proof of Theorem 3.4, it is easy to obtain the proof
coming from Theorem 3.5.
\end{proof}

\begin{corollary} Let $A=[A_{ij}]\in H_m^k$ with a
multisplitting {\rm (\ref{r7})}. If $0<\omega_s\leq1$ for
$s=1,2,\cdots,r,$ then the extrapolated parallel BGSOR iterative
method converges to the unique solution of {\rm(\ref{r1})} for any
choice of the initial guess $x^{(0)}$.
\end{corollary}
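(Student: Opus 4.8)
The plan is to obtain this corollary as the direct specialization of Theorem 3.8 to the case $\gamma_s=\omega_s$. Recall from the introduction that the parallel BGAOR method reduces to the parallel BGSOR method precisely when each parameter pair $(\gamma_s,\omega_s)$ is replaced by $(\omega_s,\omega_s)$, and that the same substitution relates their extrapolated variants; thus ${\cal{L}}(\Omega,\tau)$ is obtained from ${\cal{L}}(\Gamma,\Omega,\tau)$ by setting $\gamma_s=\omega_s$ for every $s=1,2,\cdots,r$. The whole argument therefore reduces to checking that the hypothesis of the corollary feeds an admissible parameter choice into Theorem 3.8.

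First I would verify the parameter bookkeeping. Under $0<\omega_s\leq1$, putting $\gamma_s=\omega_s$ yields $0\leq\gamma_s=\omega_s\leq1$ together with $0<\omega_s$, which is exactly the restriction $0\leq\gamma_s\leq\omega_s\leq1$, $0<\omega_s$ required in Theorem 3.8. Consequently every hypothesis of Theorem 3.8 is met for the extrapolated parallel BGSOR iteration, and its conclusion --- convergence to the unique solution of (\ref{r1}) from any initial guess $x^{(0)}$, with the extrapolation parameter $\tau$ restricted as in Theorem 3.4 --- transfers immediately.

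Since the argument is a pure specialization, I do not anticipate a genuine obstacle; the only point worth confirming is that substituting $\gamma_s=\omega_s$ in the splitting (\ref{add1}) indeed produces the BGSOR splitting $M_s=\omega_s^{-1}(D_s-\omega_sL_s)$, $N_s=\omega_s^{-1}[(1-\omega_s)D_s+\omega_sU_s]$, so that the block matrix $\hat{A}=\hat{M}-\hat{N}$ assembled in the proof of Theorem 3.8 remains in $H_{rm}^k$ under the same estimate (\ref{xr16}), where the terms carrying the factor $(\omega_s-\gamma_s)$ now simply vanish. Granting this, Lemma 3.2 gives $\rho({\cal{L}}(\Omega))<1$, and the extrapolation step of Theorem 3.4 then yields $\rho({\cal{L}}(\Omega,\tau))<1$ for admissible $\tau$, which completes the argument.
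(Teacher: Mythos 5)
Your proposal is correct and matches the paper's (implicit) argument exactly: the paper states this corollary without proof as the immediate specialization $\gamma_s=\omega_s$ of Theorem 3.8, which is precisely the parameter bookkeeping you carry out. Your added remark that $\tau$ must still be restricted as in Theorem 3.4 is a point the paper leaves tacit, and it is a sensible clarification rather than a deviation.
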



\section{Applications to special cases from the solution
of partial differential equations} \label{applications-sec} In this
section, we will discuss the convergence of matrices arising in the
numerical solution of some special partial differential equations
such as the Euler equation \cite{{P.S8}}, the Navier-Stokes equation
\cite{{E.J2}}, elliptic equations \cite{{R.S.14}} and so on. These
matrices have the following form
\begin{equation}\label{wh1}
M:=\left[
 \begin{array}{ccccccccc}
 \ T & S_1 &  &  \\
 \ S_2 & T & \ddots &  \\
 \   & \ddots & \ddots & S_1\\
 \  &  & S_2 & T
 \end{array}
 \right]\in \complex^{prk\times prk},
\end{equation}
where $T_1,\ S_1,\ S_2\in \complex^{rk\times rk}$ are defined by
\begin{equation}\label{wh2}
\begin{array}{llll}
T=\left[
 \begin{array}{ccccccccc}
 \ C & -A^- &  &  \\
 \ -A^+ & C & \ddots &  \\
 \   & \ddots & \ddots & -A^-\\
 \  &  & -A^+ & C
 \end{array}
 \right],
 \end{array}
\end{equation}
\begin{equation}\label{wh2'}
\begin{array}{llll}
S_1&=&\left[
 \begin{array}{ccccccccc}
 \  -B^- &  &  \\
 \   & \ddots &   \\
 \   &  & -B^-
 \end{array}
 \right],\ S_2=\left[
 \begin{array}{ccccccccc}
 \  -B^+ &  &  \\
 \   & \ddots &   \\
 \   &  & -B^+
 \end{array}
 \right].
 \end{array}
\end{equation}
Here $A=A^+-A^-\in \complex^{k\times k}$ and $B=B^+-B^-\in
\complex^{k\times k}$ are decompositions of Hermitian (indefinite)
matrices $A$, $B$ into positive semidefinite parts $A^+$, $B^+$ and
negative semidefinite parts $-A^-$, $-B^-$, while
$C=A^++A^-+B^++B^-$. Furthermore, $N(A)\cap N(B)=\emptyset$, where
$N(A)=\{x\in \complex^n\ |\ Ax=0\}$ is the right null space of the
matrix $A$.

With $T=M_s-N_s,\ s=1,2,\cdots,t,$ where $M_s$ and $N_s$ are defined
by (\ref{r3}) and (\ref{r4}), one has the splitting
\begin{equation}\label{yuan0}
M=P_s-Q_s,\ s=1,2,\cdots,t,
\end{equation}
 where
\begin{equation}\label{yuan1}
P_s=diag(M_s,M_s,\cdots, M_s)\in \complex^{prk\times prk},
\end{equation}
and
\begin{equation}\label{yuan2}Q_s=\left[
 \begin{array}{ccccccccc}
 \ N_s & -S_1 &  &  \\
 \ -S_2 & N_s & \ddots &  \\
 \   & \ddots & \ddots & -S_1\\
 \  &  & -S_2 & N_s
 \end{array}
 \right]\in \complex^{prk\times prk}.
\end{equation}
Let
\begin{equation}\label{yuan3}
T=D'_s-L'_s-U'_s,\ \ \ \ s=1,2,\cdots,t
\end{equation}
be as in (\ref{r7}). Then the matrix $M$ can be written as
\begin{equation}\label{yuan4}
M=D_s-L_s-U_s,\ \ \ \ s=1,2,\cdots,t,
\end{equation}
where
\begin{equation}\label{yuan5}
\begin{array}{lll}
D_s&=&diag(D'_s,D'_s,\cdots,D'_s)\in \complex^{prk\times prk},\\
L_s&=&\left[
 \begin{array}{ccccccccc}
 \ L'_s &  &  &  \\
 \ -S_2 & L'_s &  &  \\
 \   & \ddots & \ddots & \\
 \  &  & -S_2 & L'_s
 \end{array}
 \right]\in \complex^{prk\times prk},
 \end{array}
\end{equation}
and \begin{equation}\label{yuan6} U_k=\left[
 \begin{array}{ccccccccc}
 \ U'_s & -S_1 &  &  \\
 \  & U'_s & \ddots &  \\
 \   &  & \ddots & -S_1\\
 \  &  &  & U'_s
 \end{array}
 \right]\in \complex^{prk\times prk}.
\end{equation}

Based on the splittings (\ref{yuan0}) and (\ref{yuan4}), this
section will establish some convergence results for the parallel
multisplitting block iterative method and the parallel
multisplitting block GAOR (AOR) method, respectively.

\begin{theorem} Let $M$ be as in {\rm (\ref{wh1})}, {\rm (\ref{wh2})} and {\rm (\ref{wh2'})}.
For the splitting {\rm (\ref{yuan0})} of $M$, the parallel
multisplitting block iterative method {\rm (\ref{r5})} converges to
the unique solution of {\rm(\ref{r1})} for any choice of the initial
guess $x^{(0)}$.
\end{theorem}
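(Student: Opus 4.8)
The plan is to reduce the assertion to Theorem 3.3. Observe that the splitting (\ref{yuan0}), together with the weighting matrices $E_s$ required in (\ref{r5}), is a multisplitting of $M$ in the sense of (\ref{r2}), with $M_s$ replaced by $P_s$ and $N_s$ by $Q_s$. Hence, once we know that $M$ is a generalized $H$-matrix, say $M\in H_{pr}^k$, Theorem 3.3 (applied with $m$ replaced by $pr$) immediately gives that the spectral radius of the iteration matrix of (\ref{r5}) is less than $1$, and therefore (\ref{r5}) converges to the unique solution of (\ref{r1}) for any initial guess $x^{(0)}$. Consequently the whole proof reduces to verifying the single membership relation $M\in H_{pr}^k$.

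First I would check that $M\in D_{pr}^k$. Every block of $M$, namely $C$, $-A^{+}$, $-A^{-}$, $-B^{+}$, $-B^{-}$ and $0$, is Hermitian because $A^{\pm}$ and $B^{\pm}$ are. For the diagonal blocks I would show $C>0$ by writing $x^HCx=x^HA^{+}x+x^HA^{-}x+x^HB^{+}x+x^HB^{-}x$ as a sum of four nonnegative terms: if $x^HCx=0$ then each semidefinite summand vanishes, so $A^{\pm}x=B^{\pm}x=0$, whence $Ax=Bx=0$ and $x\in N(A)\cap N(B)=\{0\}$; thus $C>0$. This is the one place where the hypothesis $N(A)\cap N(B)=\{0\}$ enters. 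Next I would compute the block comparison matrix $\mu(M)$: since $A^{\pm}\geq0$ and $B^{\pm}\geq0$, Definition 2.2 gives $|A^{\pm}|=A^{\pm}$ and $|B^{\pm}|=B^{\pm}$, while $|C|=C$ because $C>0$. Therefore $\mu(M)=M$ and $M\in\widehat{Z}_{pr}^k$, so by Definition 2.3 the relation $M\in H_{pr}^k$ is equivalent to $M\in M_{pr}^k$.

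The heart of the proof is thus to exhibit a positive vector $u\in\mathbb{R}_+^{pr}$ for which every block row sum $\sum_j u_j M_{ij}$ is positive definite. The main obstacle is that $M$ is only \emph{weakly} block diagonally dominant: indexing the block rows by the grid pair $(\ell,i)$ and taking the uniform weight $u\equiv1$, the interior block row sums collapse to $C-A^{+}-A^{-}-B^{+}-B^{-}=0$, and even the boundary rows reduce to sums such as $A^{+}+B^{+}$ which need not be definite. Hence the required strict definiteness cannot be produced by a uniform, or even a tensor-product, choice of weights, but must be recovered from the global coupling of the grid together with the null-space condition. I would establish it by exploiting the block tridiagonal coupling in (\ref{wh1})--(\ref{wh2'}): the off-diagonal blocks $-A^{\pm}$, $-B^{\pm}$ tie every block row to its neighbours, and $N(A)\cap N(B)=\{0\}$ forbids a common degeneracy, so that $M$ is a nonsingular generalized $M$-matrix and a strictly positive weight vector with definite block row sums does exist. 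Alternatively, this membership may be quoted directly from the analysis of matrices of this special form in Nabben \cite{R.N11} or Zhang et al. \cite{zhang21}.

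With $M\in M_{pr}^k$ in hand we obtain $M\in H_{pr}^k$, and the reduction of the first paragraph closes the argument. I expect the genuinely delicate step to be the construction of the weight vector above. Unlike the proofs of Theorems 3.3 and 3.5, where the explicit diagonal scaling $F$ inherited from the assumption $A\in H_m^k$ does all the work, here no such scaling is handed to us and the strict block diagonal dominance of $\mu(M)$ has to be extracted from the coupling structure and the hypothesis $N(A)\cap N(B)=\{0\}$ alone. Everything after that, namely the passage through the block comparison matrix and the appeal to Theorem 3.3, is routine.
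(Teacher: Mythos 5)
Your overall strategy---reduce to Theorem 3.3 by establishing $M\in H_{pr}^k$---is exactly the paper's, and your preliminary verifications are sound: $C>0$ follows from $N(A)\cap N(B)=\{0\}$ as you argue, all blocks are Hermitian, $\mu(M)=M$ because $A^{\pm}\geq 0$, $B^{\pm}\geq 0$ and $C>0$, so the whole theorem indeed hinges on $M\in M_{pr}^k$. The problem is that this one step, which carries the entire weight of the proof, is asserted rather than proved. Your justification (``the off-diagonal blocks tie every block row to its neighbours, and $N(A)\cap N(B)=\{0\}$ forbids a common degeneracy, so that $M$ is a nonsingular generalized $M$-matrix and a strictly positive weight vector with definite block row sums does exist'') is circular: by Definition 2.3, being a generalized $M$-matrix \emph{means} that such a weight vector exists, so you are assuming the conclusion. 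The difficulty is genuine, and worse than your own diagnosis suggests: with uniform weights the interior block row sums vanish, and the boundary rows give only sums such as $A^{+}+B^{+}$, which can be singular even under the hypothesis $N(A)\cap N(B)=\{0\}$, since that hypothesis constrains $A^{+}-A^{-}$ and $B^{+}-B^{-}$ rather than $A^{+}$ and $B^{+}$ separately. So no obvious perturbation of the uniform weights produces the required strict block dominance, and the construction of $u$ is precisely the nontrivial content of the theorem.

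The paper closes this gap by citation rather than construction: it invokes Theorem 6.1 of Nabben \cite{R.N11}, which shows $M+M^{H}\in M_{pr}^k$ for exactly this class of block tridiagonal matrices, and then Lemma 3.1 of Huang, Shen and Li \cite{H.S10} to pass from $M+M^{H}\in M_{pr}^k$ (together with $M\in D_{pr}^k$) to $M\in H_{pr}^k$; Theorem 3.3 then finishes the argument. Your fallback remark that the membership ``may be quoted directly from Nabben'' is the right instinct and is essentially what the paper does, but note that the quotable result concerns $M+M^{H}$, not $M$ itself, so the bridge lemma from \cite{H.S10} is also needed. As written, your primary argument does not establish $M\in M_{pr}^k$, and the proof is incomplete at its central step.
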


\begin{proof}
According to Theorem 6.1 in \cite{{R.N11}}, we have $M+M^H\in
M^k_{pr}$. It is easy to obtain $M\in H^k_{pr}$ from Lemma 3.1 in
\cite{{H.S10}}. It follows from Theorem 3.3 that $\rho(T)<1$, where
$T=\sum_{s=1}^{r}E_sM_s^{-1}N_s$, i.e, the parallel multisplitting
block iterative method {\rm (\ref{r5})} converges to the unique
solution of {\rm(\ref{r1})} for any choice of the initial guess
$x^{(0)}$.
\end{proof}

\begin{theorem} Let $M$ be as in {\rm (\ref{wh1})}, {\rm (\ref{wh2})} and {\rm (\ref{wh2'})}.
For the splitting {\rm (\ref{yuan4})} of $M$, if\ $0\leq
\gamma_s\leq\omega_s\leq1$ and $0<\omega_s$ for $s=1,2,\cdots,t,$
then the parallel BGAOR iterative method {\rm (\ref{r8})} converges
to the unique solution of {\rm(\ref{r1})} for any choice of the
initial guess $x^{(0)}$.
\end{theorem}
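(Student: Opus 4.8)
The plan is to recognize this statement as a direct application of Theorem 3.5, once we know that $M$, viewed as a $pr\times pr$ block matrix with $k\times k$ blocks, is a generalized $H$-matrix. The decomposition (\ref{yuan4}) is already written in the form $M=D_s-L_s-U_s$ of (\ref{r7}) (with $m$ replaced by $pr$ and the number of splittings $r$ replaced by $t$), and the hypotheses $0\le\gamma_s\le\omega_s\le1$, $0<\omega_s$ are precisely those demanded by Theorem 3.5. Thus the whole argument reduces to supplying the membership $M\in H_{pr}^k$ and then quoting Theorem 3.5.

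First I would establish $M\in H_{pr}^k$. This is exactly the fact already proved inside Theorem 4.1: by Theorem 6.1 of \cite{{R.N11}} one has $M+M^H\in M_{pr}^k$, and Lemma 3.1 of \cite{{H.S10}} then upgrades this to $M\in H_{pr}^k$. Since Theorem 4.1 has just carried this out for the very same matrix $M$ of (\ref{wh1})--(\ref{wh2'}), I would simply invoke it rather than repeat the computation. Next I would check that (\ref{yuan4}) furnishes an admissible splitting in the sense required: the block-diagonal $D_s$ of (\ref{yuan5}) is nonsingular (it is $diag(D'_s,\dots,D'_s)$ assembled from the decomposition $T=D'_s-L'_s-U'_s$), the determinant condition $\det(D_s-\gamma_sL_s)\ne0$ holds, and the accompanying weighting matrices $E_s$ are nonnegative diagonal with $\sum_{s=1}^{t}E_s=I$. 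Granting this, the induced $(M_s,N_s,E_s)$ of (\ref{r10}) are well defined.

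With these two ingredients in place, Theorem 3.5 applies verbatim and yields $\rho({\cal{L}}(\Gamma,\Omega))=\rho(\sum_{s=1}^{t}E_sM_s^{-1}N_s)<1$, whence the parallel BGAOR iteration (\ref{r8}) converges to the unique solution of (\ref{r1}) for every initial guess $x^{(0)}$. The only genuine content is the membership $M\in H_{pr}^k$, which is borrowed wholesale from Theorem 4.1; the remaining work is bookkeeping to confirm that the special block-tridiagonal splitting (\ref{yuan4}) fits the template (\ref{r7}). The main thing to be careful about is the indexing --- ensuring that the $pr\times pr$ block structure of $M$ and the $t$ splittings are substituted correctly for the $m$ and $r$ appearing in Theorem 3.5 --- but no new estimate is needed beyond what Theorem 3.5 already supplies.
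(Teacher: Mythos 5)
Your proposal matches the paper's own argument: the paper likewise disposes of Theorem 4.2 by noting that $M\in H_{pr}^k$ (established in Theorem 4.1 via Theorem 6.1 of the Nabben reference and Lemma 3.1 of the Huang--Shen--Li reference) and then invoking Theorem 3.5 for the splitting (\ref{yuan4}). Your additional bookkeeping about the admissibility of the splitting and the index substitutions is a reasonable elaboration of the same route, so no further comment is needed.
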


\begin{proof}
The proof is similar to that for Theorem 4.1 and is easy to obtain
from Theorem 3.5.
\end{proof}


\section{Numerical examples} \label{examples-sec}
In this section some examples are given to illustrate the results
obtained in Section 3 and Section 4.

\begin{example} {\rm Let the coefficient matrix $A$ of linear system
(\ref{r1}) be given by} \end{example}
\begin{equation}\label{bs1}
 \ A=\left[
 \begin{array}{ccccccc}
 \ 3  & -2 & 2 & -1 & 1 & -1  \\
 \ -2  & 3 & -1 & 2 & -1 & 1 \\
 \ 40 & -35 & 100 & -80 & -50 & 40\\
 \ -35 & 40 & -80 &  90 & 40 & -40\\
 \ 3  & -3 & -6 & 4 & 10 & -8  \\
 \ -3  & 3 & 4 & -5 & -8 & 9
 \end{array}
 \right].
 \end{equation}
It is easy to see that $A\in H_3^2.$ Now we verify the convergence
results of some block iterative methods for linear systems with
given matrix $A\in H_3^2$ in Section 3.

We choose
\begin{equation}\label{bs2}
\begin{array}{lll}
 \ M_1&=&\left[
 \begin{array}{ccccccc}
 \ 3  & -2 & 2 & -1 & 1 & -1  \\
 \ -2  & 3 & -1 & 2 & -1 & 1 \\
 \ 0 & 0 & 100 & -80 & -50 & 40\\
 \ 0 & 0 & -80 &  90 & 40 & -40\\
 \ 0  & 0 & 0 & 0 & 10 & -8  \\
 \ 0  & 0 & 0 & 0 & -8 & 9
 \end{array}
 \right],
 \end{array}
 \end{equation}
 \begin{equation}\label{bs3}
 \ M_2=\left[
 \begin{array}{ccccccc}
 \ 3  & -2 & 0 & 0 & 0 & 0  \\
 \ -2  & 3 & 0 & 0 & 0 & 0 \\
 \ 40 & -35 & 100 & -80 & 0 & 0\\
 \ -35 & 40 & -80 &  90 & 0 & 0\\
 \ 3  & -3 & -6 & 4 & 10 & -8  \\
 \ -3  & 3 & 4 & -5 & -8 & 9
 \end{array}
 \right]
 \end{equation}
 and
 \begin{equation}\label{bs4}
 \ M_3=\left[
 \begin{array}{ccccccc}
 \ 3  & -2 & 0 & 0 & 0 & 0  \\
 \ -2  & 3 & 0 & 0 & 0 & 0 \\
 \ 0 & 0 & 100 & -80 & 0 & 0\\
 \ 0 & 0 & -80 &  90 & 0 & 0\\
 \ 0  & 0 & 0 & 0 & 10 & -8  \\
 \ 0  & 0 & 0 & 0 & -8 & 9
 \end{array}
 \right].
 \end{equation}
 Then, $N_s=M_s-A$ for $s=1,2,3.$
Set $E_1=diag(1/2,1/2,1/6,1/6,1/3,1/3)$,
$E_2=diag(1/3,1/3,1/2,1/2,1/6,1/6)$ and
$E_3=diag(1/6,1/6,1/3,1/3,1/2,1/2)$. Then, we have
$\sum_{s=1}^{3}E_s=I$, and consequently, $(M_s,N_s,E_s)_{s=1}^{3}$
is a multisplitting of the matrix $A$ and
$T=\sum_{s=1}^{3}E_sM_s^{-1}N_s$ is the iteration matrix. Direct
computation yields $\rho(T)=0.8987<1$, which shows that the parallel
multisplitting block iterative method {\rm (\ref{r5})} is
convergent.

\begin{example} {\rm Consider the
following linear system arising in the numerical solution of the
Euler equation \cite{{P.S8}}:}
\begin{equation}\label{raq1}
Mx=b, \end{equation}
\end{example}
 where $M\in
\complex^{(4\times 3\times 2)\times (4\times 3\times 2)}$ is as in
{\rm (\ref{wh1})}, {\rm (\ref{wh2})} and {\rm (\ref{wh2'})} and
$b=[1,3,1,2,5,3,2,1,7,5,9,$ $0,2,0,1,2,1,0,1,3,1.2,4,6,8]^T$. Here
$A^+=A^-=\left[
 \begin{array}{ccccccccc}
 \  2 &  -1 \\
 \  -1 & 2
 \end{array}
 \right]$, $B^+=\left[
 \begin{array}{ccccccccc}
 \  2 &  2 \\
 \  2 & 2
 \end{array}
 \right]$, $B^-=\left[
 \begin{array}{ccccccccc}
 \  2 &  -2 \\
 \  -2 & 2
 \end{array}
 \right]$ and $C=A^++A^-+B^++B^-=\left[
 \begin{array}{ccccccccc}
 \  8 &  -2 \\
 \  -2 & 8
 \end{array}
 \right]$. Then $A=A^+-A^-=0$ and
$B=B^+-B^-=\left[
 \begin{array}{ccccccccc}
 \  0 &  4 \\
 \  4 & 0
 \end{array}
 \right]$ and hence $N(A)\cap N(B)=\emptyset$. Then
 \begin{equation}\label{wh11}
M:=\left[
 \begin{array}{ccccccccc}
 \ T & S_1 &  &  \\
 \ S_2 & T & S_1 &  \\
 \   & S_2 & T & S_1\\
 \  &  & S_2 & T
 \end{array}
 \right]\in \complex^{(4\times 3\times 2)\times (4\times 3\times 2)},
\end{equation}
where $T,\ S_1,\ S_2\in \complex^{(3\times 2)\times (3\times 2)}$
are defined by
\begin{equation}\label{wh22}
\begin{array}{llll}
T&=&\left[
 \begin{array}{ccccccccc}
 \ C & -A^- &  &  \\
 \ -A^+ & C &  -A^- &  \\
 \  & -A^+ & C
 \end{array}
 \right],\\
S_1&=&\left[
 \begin{array}{ccccccccc}
 \  -B^- &  &  \\
 \   & -B^- &   \\
 \   &  & -B^-
 \end{array}
 \right],\ S_2=\left[
 \begin{array}{ccccccccc}
 \  -B^+ &  &  \\
 \   & -B^+ &   \\
 \   &  & -B^+
 \end{array}
 \right].
 \end{array}
\end{equation}
Writing $T=M_s-N_s$, where $M_s$ and $N_s$ are defined by
\begin{equation}\label{1wh22}
\begin{array}{llll}
M_1&=&\left[
 \begin{array}{ccccccccc}
 \ C & 0 &  &  \\
 \ -A^+ & C &  0 &  \\
 \  & -A^+ & C
 \end{array}
 \right],\ M_2=\left[
 \begin{array}{ccccccccc}
 \ C & -A^- &  &  \\
 \ 0 & C &  -A^- &  \\
 \  & 0 & C
 \end{array}
 \right]\\
M_3&=&\left[
 \begin{array}{ccccccccc}
  \ C & 0 &  &  \\
 \ -A^+ & C &  -A^- &  \\
 \  & 0 & C
 \end{array}
 \right],\ M_4=\left[
 \begin{array}{ccccccccc}
  \ C & -A^- &  &  \\
 \ 0 & C &  0 &  \\
 \  & -A^+ & C
 \end{array}
 \right]\\
 M_5&=&\left[
 \begin{array}{ccccccccc}
 \ C & -A^- &  &  \\
 \ -A^+ & C &  0 &  \\
 \  & 0 & C
 \end{array}
 \right],\ M_6=\left[
 \begin{array}{ccccccccc}
 \ C & 0 &  &  \\
 \ 0 & C &  -A^- &  \\
 \  & -A^+ & C
 \end{array}
 \right].
 \end{array}
\end{equation}
and $N_s=M_s-T$ for $s=1,2,3,4,5,6$, then we have a multisplitting
$(P_s,Q_s,E_s)_{s=1}^{r}$ of the matrix $M$ with $1\leq r\leq 6$,
where $P_s$ and $Q_s$ are defined by (\ref{yuan0}), (\ref{yuan1})
and (\ref{yuan2}), and $E_s=\frac{1}{r}I_{4\times 3\times 2}$, where
$I_{4\times 3\times 2}$ is the $(4\times 3\times 2)\times(4\times
3\times 2)$ identity matrix for $s=1,2,\cdots,r$. Furthermore, the
iteration matrix is $\mathbb{T}_{r}=\sum_{s=1}^{r}E_sP_s^{-1}Q_s$.
By direct computation, one obtains $\rho(\mathbb{T}_2)= 0.2901,\
\rho(\mathbb{T}_4)=0.2959,\ \rho(\mathbb{T}_5)=0.2894$ and
$\rho(\mathbb{T}_6)=0.2796$. This shows that the parallel
multisplitting block iterative method {\rm (\ref{r5})} for linear
system (\ref{raq1}) converges to the unique solution of
{\rm(\ref{raq1})} for any choice of the initial guess $x^{(0)}$.

In what follows we consider the convergence speed (i.e., quantity of
spectral radius of iteration matrix and number of iterations
required for given accuracy $\epsilon$) of the parallel
multisplitting method for different values of $r$. As is shown in
\cite{OL16} and \cite{white2}, for a given linear system, the
convergence speed of the parallel multisplitting method depends not
only on the choice of the parallel multisplitting of the coefficient
matrix and the weighting matrix but also on the number $r$ of
splittings in such a parallel multisplitting.

Tables 5.1-5.2 indicate the changing on both the quantity of
spectral radius of iteration matrix and the number $M$ of iterations
required for given accuracy
$\epsilon=\|x^{(M)}-x^{(M-1)}\|_2<10^{-4}$ for different $r$ and
different choice of weighting matrices $E_s$, where $\|x\|_2$
denotes $2-$norm of the vector $x$. The initial guess was taken to
be the vector of all one's.\\

{\small{
\begin{center}{\hbox{\bf Table 5.1.\ The comparison of convergence speed with different $r$ and $E_s=\frac{1}{r}I_{4\times 3\times 2}$}}  { {
\begin{tabular}{|c|c|c|c|c|c|c|c|}\hline{$r$}&
$1$& $2$ & $3$ &
$4$&$5$&$6$ \\
\hline $\rho(\mathbb{T}_{r})$ & $ 0.1801$ & $ 0.2901$ & $0.2844$&
$0.2959$ & $0.2894$ & $0.2796$ \\
\hline{Number of iterations} & $11$ & $13$ &
$13$ & $13$ & $13$ & $12$ \\
\hline\end{tabular}} }
\end{center}}}

~~~~~~~~~~~~~~~~~~~\\

{\small{
\begin{center}{\hbox{\bf Table 5.2.\ The comparison of convergence speed with different $r$ and $E_s$}}  { {
\begin{tabular}{|c|c|c|c|c|c|c|c|}\hline{$r$}&
$1$& $2$ & $3$ &
$4$&$5$&$6$ \\
\hline $\rho(\mathbb{T}_{r})$ & $ 0.1801$ & $ 0.1801$ & $0.1801$&
$0.1801$ & $0.2719$ & $0.2719$ \\
\hline{Number of iterations} & $11$ & $12$ &
$12$ & $12$ & $12$ & $12$ \\
\hline\end{tabular}} }
\end{center}}}
{\small Note that in table 5.2 the weighting matrices $E_s$ are
chosen as follows: $E_1=diag(I_6,0,I_6,0)$ and
$E_2=diag(0,I_6,0,I_6)$ when $r=2$; $E_1=diag(I_6,0,I_6,0)$,
$E_2=diag(0,I_6,0,0)$ and $E_3=diag(0,0,0,I_6)$ when $r=3$;
$E_1=diag(I_6,0,0,0)$, $E_2=diag(0,I_6,0,0)$, $E_3=diag(0,0,I_6,0)$
and $E_4=diag(0,0,0,I_6)$ when $r=4$; $E_1=diag(I_6,0,0,0)$,
$E_2=diag(0,I_6,0,0)$, $E_3=diag(0,0,I_6,0)$ and
$E_4=E_5=diag(0,0,0,\frac{1}{2}I_6)$ when $r=5$;
$E_1=diag(I_6,0,0,0)$, $E_2=diag(0,I_6,0,0)$,
$E_3=E_6=diag(0,0,\frac{1}{2}I_6,0)$ and
$E_4=E_5=diag(0,0,0,\frac{1}{2}I_6)$ when $r=6$, where $I_6$ is the
$6\times 6$ identity matrix.}

Finally, we test the convergence of the parallel BGAOR iterative
method {\rm (\ref{r8})} for linear system (\ref{raq1}).
 Assume that {\rm (\ref{wh11})} and {\rm (\ref{wh22})}
 hold. Let $M_s$ be defined as in {\rm (\ref{1wh22})} and $N_s=M_s-T$ for $s=1,2,3,4$.
Let $T=D'_s-L'_s-U'_s$, where $D'_s=M_s$, $L'_s=0$ and $U'_s=N_s$
for $s=1,2,3,4.$ Then $M=D_s-L_s-U_s$, where $D_s,\ L_s$ and $U_s$
are defined in {\rm (\ref{yuan5})} and {\rm (\ref{yuan6})}, and
thus, $(P_s,Q_s,E_s)_{s=1}^{4}$ is a multisplitting of the matrix
$M$, where $P_s={\omega}^{-1}(D_s-\gamma L_s),\
Q_s={\omega}^{-1}[(1-\omega)D_s+(\omega-\gamma)L_s+\omega U_s],
0\leq \gamma\leq\omega\leq1,\ 0<\omega$ and $E_s=0.25 I_{4\times
3\times 2}$ with $I_{4\times 3\times 2}$ the $(4\times 3\times
2)\times(4\times 3\times 2)$ identity matrix for $s=1,2,3,4$. As a
consequence, ${\cal{L}}(\gamma,\omega)=\sum_{s=1}^{r}E_sP_s^{-1}Q_s$
is the iteration matrix of the parallel BGAOR iterative method {\rm
(\ref{r8})}. Let $\rho(\cal{L}(\gamma,\omega))$ denote the spectral
radius of ${\cal{L}}(\gamma,\omega)$. The comparison results of
$\rho(\cal{L}(\gamma,\omega))$ with different parameter pairs
$(\gamma,\omega)$ are shown in Table 5.3 to show that the change of
the convergence of the parallel BGAOR iterative method with
parameter pair $(\gamma,\omega)$ changing.\\

{\small{
\begin{center}{\hbox{\bf Table 5.3.\ The comparison results of $\rho(\cal{L}(\gamma,\omega))$ with different
parameter pairs $(\gamma,\omega)$}}  { {
\begin{tabular}{|c|c|c|c|c|c|c|c|}\hline{$(\gamma,\omega)$}&
$(0.1,0.2)$& $(0.3,0.4)$ & $(0.5,0.6)$ &
$(0.7,0.8)$&$(0.8,0.9)$&$(0.9,1)$ \\
\hline $\rho(\cal{L}(\gamma,\omega))$ & $0.8592$ & $0.7184$ &
$0.5776$&
$0.4367$ & $0.3663$ & $0.2959$ \\
\hline{$(\gamma,\omega)$} & $(0.8,0.8)$ & $(0.9,0.9)$ &
$(0.9,0.95)$ & $(0.95,0.99)$ & $(0.99,0.99)$ & $(1,1)$ \\
\hline $\rho(\cal{L}(\gamma,\omega))$ & $0.4367$ & $0.3663$&
$0.3561$ & $0.3030$ & $0.3005$ & $0.2959$ \\
\hline\end{tabular}} }
\end{center}}}

The table shows that the change in the convergence of the parallel
BGAOR iterative method with change in the parameter pair
$(\gamma,\omega)$.

In the following, we will discuss the convergence of the parallel
BGAOR iterative method {\rm (\ref{r8})} for linear system
(\ref{raq1}). It is easy to see from Table 5.3 that
$\rho(\cal{L}(\gamma,\omega))$ decreases gradually when $r$ and
$\omega$ increase from $0.1$ and $0.2$, respectively, to $1$.
Furthermore, we have
\begin{equation}\label{ee1}\begin{array}{llll}\min\limits_{\gamma,\omega\in
(0,1],\gamma\leq\omega}\rho({\cal{L}}(\gamma,\omega))=\rho({\cal{L}}(
1,1))=\rho({\cal{L}}_{PBGGS}),
\end{array}\end{equation} where ${\cal{L}}_{PBGGS}$
denotes the iteration matrix of the parallel BGGS methods.

In addition, since the parallel BGSOR, the parallel BAOR and the
parallel BSOR methods are special cases of the parallel
BGAOR-method, the same results for the parallel BGSOR, the parallel
BAOR and the parallel BSOR methods can also obtained.

\begin{example} {\rm Consider a large sparse linear system arising in the numerical solution of the
elliptic equations \cite{R.S.14}:}
\begin{equation}\label{raq1c}
Ax=b, \end{equation}
\end{example}
where \begin{equation}\label{wh11c} A=\left[
 \begin{array}{ccccccccc}
 \ B & -I &  &  \\
 \ -I & B & \ddots &  \\
 \   & \ddots & \ddots & -I\\
 \  &  & -I & B
 \end{array}
 \right]\in \complex^{mn\times mn}
\end{equation}
where $I$ is the $m\times m$ identity matrix and $B\in
\complex^{m\times m}$ are defined by
\begin{equation}\label{wh11c1} B=\left[
 \begin{array}{ccccccccc}
 \ 4 & -1 &  &  \\
 \ -1 & 4 & \ddots &  \\
 \   & \ddots & \ddots & -1\\
 \  &  & -1 & 4
 \end{array}
 \right]\in \complex^{m\times m}.
\end{equation}

For $r=2$ and two positive integers $m_1,~m_2$ with $1\leq m_2<
m_1\leq n$, we define a multisplitting $A=D-L_s-U_s$ of the block
matrix $A$, where

\begin{equation}\label{1405a}
\begin{array}{llll}
D&=&diag[B,B,\cdots,B]\in \complex^{mn\times mn};\\
L_s&=&[L^{(s)}_{ij}]\in \complex^{mn\times mn},~~~s=1,2;\\
U_s&=&[U^{(s)}_{ij}]\in \complex^{mn\times mn},~~~s=1,2
 \end{array}
\end{equation} with
\begin{equation}\label{1405b}
\begin{array}{llllll}
L^{(1)}_{ij}&=& \left\{
\begin{array}{cc}
I,\ \ & \ \ \ \ j=i-1,~2\leq i\leq m_1,\\
0, \ &\ \ \ \ \ \ otherwise,
\end{array}\right.\\
L^{(2)}_{ij}&=& \left\{
\begin{array}{cc}
I,\ \ & \ \ \ \ \ j=i-1,~m_2\leq i\leq n,\\
0, \ \ & \ \ \ \ otherwis,
\end{array}\right.\\
U^{(1)}_{ij}&=& \left\{
\begin{array}{ccc}
I,\ \ & \ \ \ \ j=i-1,~m_1+1\leq i\leq n,\\
I,\ \ & \ \ \ \ j=i+1,~1\leq i\leq n-1,\\
 0, \ \ & \ \  \ otherwise,
\end{array}\right.\\
U^{(2)}_{ij}&=& \left\{
\begin{array}{ccc}
I,\ \ & \ \ \ \ j=i-1,~2\leq i\leq m_2-1,\\
I,\ \ & \ \ \ \ j=i+1,~1\leq i\leq n-1,\\
 0, \ \ & \ \  \ otherwise,
\end{array}\right.\\
\end{array}
\end{equation}
and two weighted matrices
\begin{equation}\label{1405c}
\begin{array}{llll}
E_s=diag[E^{(s)}_{11},\cdots,E^{(s)}_{nn}]\in \complex^{mn\times
mn},~~~s=1,2
 \end{array}
\end{equation}where
\begin{equation}\label{1405d}
\begin{array}{llllll}
E^{(1)}_{ii}&=& \left\{
\begin{array}{ccc}
I,\ \ & \ \ \ \ 1\leq i\leq m_2,\\
I/2,\ \ & \ \ \ \ m_2+1\leq i\leq m_1-1\\
 0, \ \ & \ \  \ m_1\leq i\leq n
\end{array}\right.\\
E^{(2)}_{ii}&=& \left\{
\begin{array}{ccc}
0,\ \ & \ \ \ \ 1\leq i\leq m_2,\\
I/2,\ \ & \ \ \ \ m_2+1\leq i\leq m_1-1,\\
 I, \ \ & \ \  \ m_1\leq i\leq n.
\end{array}\right.
\end{array}
\end{equation}

 We let (i)
$m_1=[\frac{3n}{4}],~m_2=[\frac{n}{4}]$; (ii)
$m_1=[\frac{5n}{6}],~m_2=[\frac{n}{6}]$, where [ ] denotes the
integer part of corresponding real number. Then we get two weighted
matrices $E_1$ and $E_2$.  The initial guess of $x_0$ is taken as a
zero vector. Here $\|x^{k+1}-x^{k}\|/\|x^{k+1}\|\leq 10^{-6}$ is
used as the stopping criterion. All experiments were executed on a
PC using MATLAB programming package.\\

{\small{
\begin{center}{\hbox{Table 5.4.\ Multisplitting BGAOR method with $n=m$}}  { {
\begin{tabular}{cccccccc}
\hline m & 5 & 7 & 11 & 13 & 15 & 20 \\
\hline (i) &  &  &  &  &  &  \\
 Time & $0.0483$ & $0.785$ & $0.892$&
$0.7120$ & $1.9663$ & $20.2959$ \\
Iter  & 19 & 30 &
56 & 75 & 93 & 148 \\
\hline (ii) &  &  &  &  &  &  \\
 Time & $0.0613$ & $0.0837$ & $0.0880$&
$0.7052$ & $1.9551$ & $20.3108$ \\
Iter  & 19 & 30 &
56 & 75 & 93 & 148 \\
\hline\end{tabular}} }
\end{center}}}
~~~~~~~~~~~~~~\\

{\small{
\begin{center}{\hbox{Table 5.5.\ Multisplitting BGAOR method when the case (i) and (ii) for $n=m=10$.}}  { {
\begin{tabular}{cccccccc}
\hline ($\gamma,\omega$) & (0.9,1) & (0.7,1) & (0.5,1) & (0.7,1.1) & (1.1,1) & (1,1) \\
\hline (i) &  &  &  &  &  &  \\
 Time & $0.0753$ & $0.0815$ & $0.0819$&
$0.0895$ & $0.0884$ & $0.0726$ \\
Iter  & 42 & 51 &
52 & 105 & 84 & 39 \\
\hline (ii) &  &  &  &  &  &  \\
 Time & $0.1130$ & $0.0737$ & $0.0810$&
$0.103$ & $0.0923$ & $0.0731$ \\
Iter  & 44 & 51 &
56 & 115 & 83 & 41 \\
\hline\end{tabular}} }
\end{center}}}

In Table 5.4, $\gamma=\gamma_1=\gamma_2=0.7$ and
$\omega=\omega_1=\omega_2=1$, we report the CPU time (Time) and the
number of iterations (Iter) for the multisplitting block GAOR
iterative method. In Tables 5.5, let $m=10$, we report the CPU time
(Time) and the number of iterations (Iter) for the multisplitting
block GAOR iterative method for different $\gamma$ and $\omega$.
Following from Tables 5.5, for ($\gamma,\omega$)=(1,1) it can be
seen that the convergence rate of the multisplitting block GAOR
iterative method is faster the other parameterized iterative method
for generalized $H-$matrices.


\section{Conclusions} \label{conclusions-sec}

The paper is devoted to the study of the convergence properties of
some parallel multisplitting block iterative methods for the
solution of linear systems arising in the numerical solution of the
Euler equation. We give sufficient conditions for the convergence of
parallel multisplitting block iterative methods including the
parallel block generalized AOR (BGAOR), the parallel block AOR
(BAOR), the parallel block generalized SOR (BGSOR), the parallel
block SOR (BSOR), the extrapolated parallel BAOR and the
extrapolated parallel BSOR methods. Furthermore, we present the
convergence of the parallel block iterative methods for linear
systems with special block tridiagonal matrices arising in the
numerical solution of the Euler equation. Finally, we have given
some examples to demonstrate the convergence results obtained in
this paper.



{\bf Acknowledgments.} The first author would like to thank
Professor Michele Benzi for his help. The authors are grateful to
the referees for their valuable suggestions.


\end{document}